\newtheorem{theorem}{Theorem}[section]
\newtheorem{lemma}[theorem]{Lemma}
\newtheorem{claim}[theorem]{Claim}
\newtheorem{corollary}[theorem]{Corollary}
\newtheorem{conjecture}[theorem]{Conjecture}
\theoremstyle{definition}
\newtheorem{definition}[theorem]{Definition}
\numberwithin{equation}{section}
\newcommand{\rr}{\mathds{R}}
\newcommand{\zz}{\mathds{Z}}
\newcommand{\ff}{\mathcal{F}}
\DeclareMathOperator{\conv}{conv}
\DeclareMathOperator{\diam}{diam}
\title{A m\'elange of diameter Helly-type theorems}
\author[Dillon]{Travis Dillon}
\address{Lawrence University, 711 E.~Boldt Way, Appleton, WI 54911}
\email{travis.a.dillon@lawrence.edu}
\author[Sober\'on]{Pablo Sober\'on}\address{Baruch College, City University of New York, One Bernard Baruch Way, New York, NY 10010} 
\email{pablo.soberon-bravo@baruch.cuny.edu}
\thanks{This research project was completed as part of the 2020 Baruch Discrete Mathematics REU, supported by NSF awards DMS-1802059, DMS-1851420, and DMS-1953141.  Sober\'on's research is also supported by PSC-CUNY grant 62639-00-50.}
\begin{document}

\begin{abstract}
A Helly-type theorem for diameter provides a bound on the diameter of the intersection of a finite family of convex sets in $\mathds{R}^d$ given some information on the diameter of the intersection of all sufficiently small subfamilies.  We prove fractional and colorful versions of a longstanding conjecture by B\'ar\'any, Katchalski, and Pach.  We also show that a Minkowski norm admits an exact Helly-type theorem for diameter if and only if its unit ball is a polytope and prove a colorful version for those that do.  Finally, we prove Helly-type theorems for the property of ``containing $k$ colinear integer points.''
\end{abstract}

\maketitle

\section{Introduction}

Helly's theorem is one of the most prominent results on the intersection properties of families of convex sets \cites{Radon:1921vh, Helly:1923wr}.  It says that  \textit{if the intersection of every $d+1$ or fewer elements of a finite family of convex sets in $\rr^d$ is nonempty, then the intersection of the entire family is nonempty.}  This result has many extensions and generalizations, including topological, colorful, and fractional variants (see, for example, \cites{Amenta:2017ed, Holmsen:2017uf} and the references therein).

Quantitative versions of Helly's theorem guarantee that the intersection of a family of convex sets is not just nonempty but ``large'' in some quantifiable sense. B\'ar\'any, Katchalski, and Pach initiated this direction of research \cites{Barany:1982ga, Barany:1984ed} when they proved that \textit{if the intersection of every $2d$ or fewer elements of a finite family of convex sets in $\rr^d$ has volume greater than or equal to 1, then the intersection of the entire family has volume at least $d^{-2d^2}$.}

Nasz\'odi \cite{Naszodi:2016he} improved the guarantee of the volume in the intersection to $d^{-2d}$, mostly settling this volumetric variant. His approach, based on sparsification of John decompositions of the identity, has been improved in several articles \cites{Brazitikos:2016ja, Brazitikos:2017ts, Damasdi:2019vm, Vidal:2020iw}. A constellation of related results that adjust the function measuring the size of the intersection, the cardinality of the subfamily intersection, or the guarantee in the conclusion have since been proven \cites{DeLoera:2017gt, Soberon:2016co, Rolnick:2017cm, Sarkar:2019tp}. B\'ar\'any, Katchalski, and Pach conjectured a Helly-type theorem for diameter as well, which remains open.

\begin{conjecture}[B\'ar\'any, Katchalski, Pach 1982 \cite{Barany:1982ga}]\label{conj:BKP}
	  Let $\mathcal{F}$ be a finite family of convex sets in $\rr^d$.  If the  intersection of every $2d$ or fewer members of $\ff$ has diameter greater than or equal to 1, then the intersection of $\ff$ has diameter greater than or equal to $c d^{-1/2}$, for some absolute constant $c > 0$.
\end{conjecture}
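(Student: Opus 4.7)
Conjecture~\ref{conj:BKP} remains open, so the following outlines a strategy I would attempt rather than a completed proof. The plan is to argue by contrapositive: assume the intersection $K := \bigcap \ff$ has diameter $\delta$, and try to exhibit a subfamily of $\ff$ of size at most $2d$ whose own intersection already has diameter at most $C\sqrt{d}\,\delta$. Choosing $\delta < (1/C)d^{-1/2}$ would then contradict the hypothesis that every $2d$-fold subintersection has diameter at least $1$.

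First I would read off the extremal directions from the shape of $K$. Placing a John ellipsoid so that $E \subseteq K - x_0 \subseteq \sqrt{d}\,E$, with principal axes $u_1,\dots,u_d$ and semi-axis lengths $w_1 \ge \cdots \ge w_d$, gives $\diam K \asymp w_1$. For each $i$ I would select two witnesses $F_i^+, F_i^- \in \ff$ whose bounding hyperplanes approximate the supporting hyperplanes of $K$ in directions $\pm u_i$ as tightly as possible; such witnesses exist in principle because $K = \bigcap \ff$ forces every supporting hyperplane of $K$ to arise as a limit of supporting hyperplanes of members of $\ff$. The intersection of the resulting $2d$ sets is then trapped in a parallelepiped with side lengths $\asymp w_i$ along $u_i$, whose diameter is at most $2\sqrt{\sum_i w_i^2} \le 2\sqrt{d}\, w_1$, which is of the required order.

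The main obstacle — and the reason this conjecture has resisted proof for four decades — is quantitative control of the witness selection. Approximating a single supporting hyperplane of $K$ from within $\ff$ with bounded multiplicative error is routine, but doing so for all $d$ axis directions \emph{simultaneously} while still using only $2d$ sets is essentially a new selection principle. Existing partial results (Brazitikos and others) circumvent this by selecting more witnesses, yielding bounds of the form $cd^{-k}$ with $k > 1/2$. A plausible inroad would be to combine the fractional and colorful versions proved elsewhere in this paper with a sparsification of John-type decompositions in the spirit of Nasz\'odi's treatment of the volumetric variant; however, diameter is subadditive rather than log-concave, so the analogue of Nasz\'odi's argument does not transfer directly, and I expect that closing the gap to the sharp exponent $-1/2$ will require a genuinely new idea, perhaps one that exploits a packing- or covering-based duality between small-diameter bodies and their supporting hyperplane configurations.
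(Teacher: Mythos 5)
The paper does not prove Conjecture~\ref{conj:BKP}; it states it as an open problem attributed to B\'ar\'any, Katchalski, and Pach, and the body of the paper proves weaker relaxations (fractional, colorful, Minkowski-norm, and discrete analogues) rather than the conjecture itself. You correctly recognize this, so your submission is a strategy sketch, not a proof, and must be judged on those terms.

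Within the sketch there is one step that is not merely unquantified but actually false as stated, and it happens to be exactly where the difficulty of the conjecture lives. You assert that ``$K = \bigcap \ff$ forces every supporting hyperplane of $K$ to arise as a limit of supporting hyperplanes of members of $\ff$,'' and you use this to claim witnesses $F_i^{\pm}$ approximating the supporting hyperplanes of $K$ in the John-ellipsoid directions $\pm u_i$. But the normal cone of $K$ at a boundary point $p$ is the \emph{conical hull} of the normal cones at $p$ of the finitely many active sets $F \ni p$, not their union. A direction $u_i$ can lie deep in the interior of this conical hull while every individual $F$ only supplies supporting directions far from $u_i$ — so for a prescribed direction such as a John axis there need not exist any $F \in \ff$ with $h_F(u_i)$ comparable to $h_K(u_i)$. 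A lens-shaped intersection of two large balls already exhibits this: the supporting hyperplane of the lens orthogonal to its long axis is not close to any supporting hyperplane of either ball. Consequently, the parallelepiped trap you want — $2d$ sets pinning $\bigcap$ inside a box with side lengths $\asymp w_i$ — does not follow from the existence of the John ellipsoid alone. Selecting a bounded number of members of $\ff$ whose normal-cone contributions jointly reconstruct the extremal directions of $K$ with controlled distortion is precisely the selection problem that Nasz\'odi's sparsification solves in the volume setting and that remains unsolved for diameter; so the gap you flag as ``quantitative control of witness selection'' is in fact preceded by a qualitative gap in the existence of witnesses, and identifying that distinction is essential if you want to make progress on this route.
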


B\'ar\'any, Katchaslki, and Pach showed that the diameter of the intersection is at least $d^{-2d}$.  Brazitikos improved this to $d^{-11/2}$, which is the first polynomial bound on the diameter of the intersection \cite{Brazitikos:2018uc}.  Brazitikos \cite{Brazitikos:2016ja} also proved, under the hypothesis that the intersections of subfamilies of size $\alpha d$ have diameter at least 1 (for some large enough absolute constant $\alpha$), that the intersection of the entire family has diameter at least $c d^{-3/2}$ and strengthened the bound to $c d^{-1/2}$ under the further assumption that each set is centrally symmetric. Asymptotically optimal bounds are known for much larger subfamily intersection sizes \cite{Soberon:2016co}.

In this manuscript we prove several new Helly-type theorems for the diameter. First, we prove that Conjecture \ref{conj:BKP} holds for at least a large subfamily of $\ff$.

\begin{theorem}\label{thm:diameter-fractional}
	There exists a decreasing function $\gamma\colon (0,\sqrt{2}) \to (0,1]$ such that $\gamma(c) \to 1$ as $c \to 0$ and the following holds for every $c \in (0,\sqrt{2})$, $\alpha \in (0,1]$, and $d \geq 2$.  Let $\beta = 1- \big(1-\alpha\cdot \gamma(c)\big)^{1/2d}$ and $\ff$ be a finite family of convex sets in $\rr^d$.  If $\bigcap \mathcal H$ has diameter greater than or equal to 1 for at least $\alpha \binom{|\ff|}{2d}$ subcollections $\mathcal H \subseteq \ff$ of $2d$ sets, then there exists a subfamily $\mathcal G \subseteq \ff$ such that $|\mathcal G| \ge \beta |\ff|$ and the diameter of $\bigcap \mathcal G$ is greater than or equal to $c d^{-1/2}$.
\end{theorem}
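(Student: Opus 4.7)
My plan is to reduce the problem to a one-dimensional fractional Helly statement via averaging over projection directions.

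For each ``heavy'' $2d$-subfamily $\mathcal H \subseteq \ff$ (those with $\diam(\bigcap \mathcal H) \geq 1$), fix a unit vector $v_\mathcal H \in S^{d-1}$ along which $\bigcap \mathcal H$ contains a chord of length at least $1$. By rotational symmetry, the measure $\gamma(c) := \sigma(\{u \in S^{d-1} : |\langle u, v\rangle| \geq cd^{-1/2}\})$ (for $\sigma$ the normalized Haar measure) is independent of the unit $v$, and a direct spherical cap computation shows $\gamma(c) \to 1$ as $c \to 0^+$ and $\gamma(c) > 0$ on $(0, \sqrt{2})$ for every $d \geq 2$. Averaging, there exists a direction $u^* \in S^{d-1}$ such that at least $\alpha\gamma(c)\binom{|\ff|}{2d}$ of the heavy tuples satisfy the angular condition $|\langle u^*, v_\mathcal H\rangle| \geq cd^{-1/2}$. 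For each such tuple the chord of $\bigcap \mathcal H$ projects onto $\rr u^*$ to a segment of length at least $cd^{-1/2}$, so the intersection of projections $\bigcap_{f \in \mathcal H} I_f$ (writing $I_f = \pi(f)$) contains an interval of that length.

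Defining $J_f := \{x \in \rr : [x, x + cd^{-1/2}] \subseteq I_f\}$, the condition becomes: at least $\alpha\gamma(c)\binom{|\ff|}{2d}$ of the $2d$-subfamilies of $\{J_f\}_{f \in \ff}$ have nonempty intersection. A one-dimensional fractional Helly theorem for $2d$-tuples---deducible from Kalai's upper bound theorem for $1$-Leray complexes, which gives $f_{k-1}(N) \leq \binom{n}{k} - \binom{n-\omega(N)}{k}$ on the face numbers of the nerve $N$ of the interval family---then yields a point $x^*$ lying in at least $\bigl(1 - (1-\alpha\gamma(c))^{1/(2d)}\bigr)|\ff|$ of the $J_f$'s, defining the subfamily $\mathcal G$ of the required size.

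The main obstacle is the last step: translating the common point $x^* \in \bigcap_{f \in \mathcal G} J_f$ back to a diameter bound on $\bigcap_{f \in \mathcal G} f \subset \rr^d$. The conclusion $[x^*, x^* + cd^{-1/2}] \subseteq \pi(f)$ for each $f \in \mathcal G$ does not directly imply a diameter bound, because the projection of an intersection is in general strictly smaller than the intersection of projections. I would resolve this by working instead with the shrunken convex bodies $f^{-} := f \cap \bigl(f - cd^{-1/2}\, u^*\bigr) \subset \rr^d$; a common point of $\{f^{-} : f \in \mathcal G\}$ gives an explicit length-$cd^{-1/2}$ chord in direction $u^*$ inside $\bigcap_{\mathcal G} f$. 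The technical challenge here is ensuring that enough heavy $\mathcal H$ satisfy $\bigcap_{\mathcal H} f^{-} \neq \emptyset$ from the angular condition on $v_\mathcal H$---subtle because if $\bigcap \mathcal H f$ is low-dimensional (e.g., a chord in direction $v_\mathcal H$), chords in nearby directions $u^*$ need not exist in it.
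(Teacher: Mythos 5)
Your averaging step is essentially the paper's: you distribute spherical caps around the chord directions $v_\mathcal H$ and find a direction $u^*$ with $|\langle u^*, v_\mathcal H\rangle|\ge cd^{-1/2}$ for at least a $\gamma(c)\alpha$-fraction of the heavy $2d$-tuples (the paper uses ball-cap volumes rather than spherical surface measure, a cosmetic difference that produces a slightly different $\gamma$, but that does not matter). The gap is exactly the one you flag, and neither of your two proposed finishes closes it. Option (a) -- projecting onto $\rr u^*$, passing to the intervals $J_f$, and applying a one-dimensional fractional Helly -- only yields a point common to many $\pi(f)$, which says nothing about $\pi(\bigcap_{\mathcal G} f)$; indeed $\bigcap_\mathcal{G} f$ can be empty while every $\pi(f)$ is large. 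Option (b) -- the shrunken bodies $f^- = f\cap(f-cd^{-1/2}u^*)$ -- requires $\bigcap_{\mathcal H} f^-\ne\emptyset$, i.e.\ a chord of $\bigcap\mathcal H$ \emph{exactly} in direction $u^*$, and this does not follow from the angular condition: if $\bigcap\mathcal H$ is a one-dimensional segment in direction $v_\mathcal H$ it has $u^*$-width $\ge cd^{-1/2}$ but contains no chord in direction $u^*$ at all.

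The missing ingredient is to weaken the conclusion from ``a chord in direction $u^*$'' to ``$u^*$-width $\ge cd^{-1/2}$'' and invoke a fractional Helly theorem for width, which is exactly how the paper finishes. Concretely, the paper reduces to Theorem~\ref{thm:v-width-fractional}, whose proof uses the lift
\[
S(K) = \{(x,y)\in\rr^d\times\rr^d : x\in K,\ x+y\in K,\ \langle y,u^*\rangle = 1\},
\]
a convex set living in a $(2d-1)$-dimensional affine subspace. This encodes the condition ``there is a chord $y$ with prescribed $u^*$-projection'' without fixing the direction of $y$, so it is a convex condition that the caps' angular bound genuinely verifies, and the ordinary fractional Helly theorem in dimension $2d-1$ applies to $2d$-tuples with Kalai's bound, producing exactly the constant $\beta = 1-(1-\alpha\gamma(c))^{1/2d}$. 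A common point $(x,y)\in\bigcap_{K\in\mathcal G}S(K)$ gives a segment $[x,x+y]\subseteq\bigcap\mathcal G$ with $\langle y,u^*\rangle = cd^{-1/2}$ after rescaling, hence $\|y\|\ge cd^{-1/2}$ by Cauchy--Schwarz. This single device simultaneously fixes the projection issue in your option (a) and the rigid-direction issue in your option (b), and also makes the Helly parameter land precisely on $2d$, which your in-$\rr^d$ variant (whose natural Helly number is $d+1$) would not.
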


We exclude the case $d=1$ since the real line has an exact diameter Helly-type theorem, while all higher dimensions do not. Crucially, for $\alpha=1$ we have $\beta \to 1$ as $c \to 0$, so the size of the subfamily can be arbitrarily close to that of the original set.  \cref{thm:diameter-fractional} suggests that it should be possible to extend the diameter conjecture to a fractional version (with an absolute $c$) in the same way that Katchaslki and Liu generalized Helly's theorem \cite{Katchalski:1979bq}.

We also prove a colorful variant of Theorem \ref{thm:diameter-fractional}, similar to Lov\'asz's colorful Helly theorem \cite{Barany:1982va}, at the cost of a slightly smaller constant $\beta$.

\begin{theorem}\label{thm:diameter-fractional-colorful}
Let $\gamma\colon (0,\sqrt{2}) \to (0,1]$ be the function in Theorem \ref{thm:diameter-fractional}. For each $c \in (0,\sqrt{2}$), $\alpha \in (0,1]$, and $d \geq 2$, set $\beta = 1-2d\big(1-\alpha\cdot \gamma(c)\big)^{1/2d}$. Assume that $\ff_1, \ldots, \ff_{2d}$ are finite families of convex sets in $\rr^d$ and set $N = \prod_{i=1}^{2d} |\ff_i|$.  If $\bigcap_{i=1}^{2d}F_i$ has diameter greater than or equal to 1 for at least $\alpha \binom{N}{2d}$ different $2d$-tuples $(F_i)_{i=1}^{2d}$ with $F_i \in \ff_i$ for each $i$, then there exists an index $k\in [2d]$ and a subfamily $\mathcal G \subseteq \ff_k$ such that $|\mathcal G| \ge \beta |\ff_k|$ and the diameter of $\bigcap \mathcal G$ is greater than or equal to $c d^{-1/2}$.
\end{theorem}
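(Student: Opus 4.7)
Set $\eta = (1-\alpha\gamma(c))^{1/2d}$, so $\beta = 1 - 2d\eta$, and observe that the monochromatic bound in Theorem~\ref{thm:diameter-fractional} equals $1 - \eta$. My plan is to open up the proof of Theorem~\ref{thm:diameter-fractional}, extract its ``common segment'' step in colorful form, and then finish via a product-of-probabilities estimate and pigeonhole over the color classes. The extra factor of $2d$ in $\beta$ emerges precisely from this last step.

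\emph{Step 1 (Colorful segment extraction).} The proof of Theorem~\ref{thm:diameter-fractional} presumably produces, from its $\alpha$-fractional hypothesis, a single segment $\sigma^*$ of length $\ge cd^{-1/2}$ such that $\sigma^* \subseteq \bigcap\mathcal{H}$ for a fraction $\ge \alpha\gamma(c)$ of the $(2d)$-subsets $\mathcal{H}$. This is done by covering the space of length-$cd^{-1/2}$ segments by a finite net (whose cardinality contributes the factor $\gamma(c)$) and pigeonholing on which segment lies inside the intersection. In the colorful setting, the same discretization-and-pigeonhole argument applies verbatim: only the underlying counting changes, from $(2d)$-subsets of a single family to colorful tuples $(F_1,\ldots,F_{2d}) \in \mathcal{F}_1 \times \cdots \times \mathcal{F}_{2d}$. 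Thus we may assume $\sigma^*$ of length $\ge cd^{-1/2}$ with $\sigma^* \subseteq \bigcap_i F_i$ for a fraction $\ge \alpha\gamma(c) = 1 - \eta^{2d}$ of the colorful $(2d)$-tuples.

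\emph{Step 2 (Product inequality and pigeonhole).} For each $i \in [2d]$, let $q_i = |\{F \in \mathcal{F}_i : \sigma^* \subseteq F\}|/|\mathcal{F}_i|$. Since a colorful tuple contains $\sigma^*$ in its intersection exactly when each coordinate does, and the tuples are taken independently across colors,
\[
\prod_{i=1}^{2d} q_i \;\ge\; 1 - \eta^{2d}.
\]
I claim some $q_k \ge 1 - 2d\eta = \beta$. Otherwise each $q_i < 1 - 2d\eta$, and then
\[
\prod_{i=1}^{2d} q_i \;<\; (1 - 2d\eta)^{2d} \;\le\; 1 - 2d\eta \;\le\; 1 - \eta^{2d},
\]
where the middle inequality uses $(1-a)^n \le 1-a$ for $n \ge 1$, $a \in [0,1]$, and the last uses $2d\eta \ge \eta^{2d}$ in the nontrivial range $\eta \le 1/(2d)$ (outside this range $\beta \le 0$ and the theorem is vacuous). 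This contradicts the displayed lower bound. Taking $\mathcal{G} = \{F \in \mathcal{F}_k : \sigma^* \subseteq F\}$ gives $|\mathcal{G}| \ge \beta|\mathcal{F}_k|$ and $\bigcap \mathcal{G} \supseteq \sigma^*$, so $\diam \bigcap \mathcal{G} \ge cd^{-1/2}$.

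\emph{Main obstacle.} The real work is in Step 1: one must verify that the segment-extraction argument inside the proof of Theorem~\ref{thm:diameter-fractional} is agnostic to whether the ``large fraction'' of intersections of diameter $\ge 1$ comes from $(2d)$-subsets of a single family or from colorful $(2d)$-tuples, producing in either case a fixed segment $\sigma^*$ contained in a fraction $\ge \alpha\gamma(c)$ of those intersections. Once this is in hand, Step 2 is a short product-of-probabilities argument and is the only genuinely new content for the colorful statement.
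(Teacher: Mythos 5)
Your Step 1 mischaracterizes what the proof of Theorem~\ref{thm:diameter-fractional} actually delivers. That proof does not discretize the space of length-$cd^{-1/2}$ segments, and it does not produce a fixed segment $\sigma^*$ contained in a large fraction of the intersections $\bigcap\mathcal{H}$. What it produces is a fixed \emph{direction} $v$: the pigeonhole is performed over the unit sphere via the cap construction (that is where $\gamma(c)$ comes from --- it is the normalized volume of a pair of opposite $cd^{-1/2}$-caps, a quantity on the sphere of directions, not on any space of segments), and the conclusion of that step is only that a $\geq\alpha\gamma(c)$ fraction of the $2d$-fold intersections have $v$-width at least $cd^{-1/2}$. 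This is strictly weaker than what you need: sets can share a direction of large width without sharing any single segment, so the quantities $q_i = |\{F \in \ff_i : \sigma^* \subseteq F\}|/|\ff_i|$ in your Step 2 are undefined, and the clean product inequality $\prod q_i \geq 1 - \eta^{2d}$ has nothing to apply to. (A segment-net argument also cannot be set up in general, since the convex sets may sit anywhere in $\rr^d$ and have no common bounded region over which to discretize positions.)

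The passage from ``many tuples with large $v$-width in a common direction'' to ``a large monochromatic subfamily whose intersection contains a segment'' is exactly the content of Theorem~\ref{thm:v-width-fractional} and its colorful analogue, and it cannot be bypassed by a product-of-probabilities calculation. The correct repair of your outline is: keep the direction-finding cap/pigeonhole step exactly as in the paper (it does go through verbatim with colorful $2d$-tuples in place of $2d$-subsets, and that part of your ``main obstacle'' paragraph is right), but then lift each $K$ to $S(K) = \{(x,y): x\in K,\ x+y\in K,\ \langle y,v\rangle = 1\}$ in a $(2d-1)$-dimensional affine slice and apply the colorful fractional Helly theorem of B\'ar\'any, Fodor, Montejano, Oliveros, and P\'or, with Kim's quantitative bound. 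It is Kim's bound --- applied in dimension $2d-1$ with $2d$ color classes and intersection proportion $\alpha\gamma(c)$ --- that produces $\beta = 1 - 2d(1-\alpha\gamma(c))^{1/2d}$, not the pigeonhole in your Step~2. The numerical coincidence with your Step 2 arithmetic is suggestive but not a proof; the ingredient that actually extracts a common point (hence a common segment) for a large monochromatic subfamily is the fractional Helly machinery, which your proposal omits.
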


The description ``colorful'' is derived from thinking of each family $\ff_i$ as having a particular color.  Then, if the intersections of sufficiently many colorful collections (containing one set of each color) have large diameter, there is a large monochromatic family whose intersection has large diameter. In light of \cref{thm:diameter-fractional-colorful}, we postulate a colorful version of the B\'ar\'any-Katchalski-Pach conjecture.

\begin{conjecture}
	Let $\ff_1, \ldots, \ff_{2d}$ be finite families of convex sets in $\rr^d$.  If $\bigcap_{i=1}^{2d} F_i$ has diameter greater than or equal to 1 for every $2d$-tuple $(F_i)_{i=1}^{2d}$ with $F_i \in \ff_i$, then there exists an index $k \in [2d]$ such that $\bigcap \ff_k$ has diameter greater than or equal to $c d^{-1/2}$, for some absolute constant $c$.
\end{conjecture}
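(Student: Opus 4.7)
The plan is to attack the conjecture by pushing \cref{thm:diameter-fractional-colorful} into its limiting regime. The hypothesis of the conjecture corresponds exactly to $\alpha = 1$ in that theorem, which yields $\beta = 1 - 2d\bigl(1-\gamma(c)\bigr)^{1/2d}$; this approaches $1$ only as $c \to 0$, so the current fractional bound falls short of the desired $\beta = 1$. My first move would be to reopen the proof of \cref{thm:diameter-fractional-colorful} and examine whether the loss encoded in $\gamma(c)$ can be removed when \emph{every} colorful $2d$-tuple satisfies the diameter hypothesis, perhaps by iterating the fractional statement so that at each pass the exceptional sets in some color class are absorbed until none remain.

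A parallel strategy is to reduce the colorful conjecture to the monochromatic \cref{conj:BKP}. Setting $\ff_1 = \cdots = \ff_{2d} = \ff$ shows the colorful statement implies the monochromatic one, so any complete proof must simultaneously resolve the open B\'ar\'any--Katchalski--Pach conjecture. Assuming \cref{conj:BKP} with some constant $c_0$, I would proceed by contradiction: if $\diam(\bigcap \ff_k) < c d^{-1/2}$ for every $k \in [2d]$, then applied to each color class, the monochromatic conjecture produces a subfamily $\mathcal{H}_k \subseteq \ff_k$ of at most $2d$ sets with $\diam(\bigcap \mathcal{H}_k) < 1$. The core task is then to assemble these $2d$ monochromatic witnesses into a single colorful $2d$-tuple (one set from each color class) whose intersection still has diameter less than $1$, contradicting the colorful hypothesis.

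The main obstacle is that, unlike for nonemptiness, diameter behaves poorly under set replacement: swapping one set for another inside a colorful tuple can move the diameter of the intersection in either direction, so the gluing step that drives most colorful Helly proofs has no immediate analogue. A promising way around this is to reduce to a one-dimensional problem via a direction. Concretely, cover $S^{d-1}$ by a $\theta$-net of size roughly $\theta^{-(d-1)}$, apply pigeonhole on the witness directions for the diameters of colorful intersections to fix a single direction $u$ that works for a substantial fraction of tuples, project every set onto the line spanned by $u$, and exploit the exact diameter Helly theorem on $\rr$. The target rate $d^{-1/2}$ would have to emerge from balancing the covering number of $S^{d-1}$ against the width defect introduced by coarsening the direction, and making this trade-off compatible with the colorful-to-monochromatic transfer is where I expect the real difficulty to lie.
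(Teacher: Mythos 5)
This statement is posed in the paper as a \emph{conjecture}, not a theorem: the authors explicitly offer it as a colorful analogue of \cref{conj:BKP} and do not claim a proof. So there is no proof in the paper to compare your attempt against, and indeed your proposal does not constitute a proof either --- it is a catalogue of strategies, each of which you yourself flag as incomplete. You correctly identify the essential reason a proof cannot be expected here: taking $\ff_1 = \cdots = \ff_{2d} = \ff$ recovers the B\'ar\'any--Katchalski--Pach \cref{conj:BKP}, which remains open, so any proof of the colorful statement would settle a longstanding problem. Your diagnosis of what \cref{thm:diameter-fractional-colorful} does and does not give is also accurate: at $\alpha = 1$ one obtains $\beta = 1 - 2d\bigl(1-\gamma(c)\bigr)^{1/2d} < 1$ for any $c > 0$, and since $\gamma(c) \to 1$ forces $c \to 0$, the fractional theorem cannot be pushed to $\beta = 1$ with a constant $c$ by a simple limiting argument.

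Where the proposal genuinely stalls is in the last paragraph. The $\theta$-net idea trades the covering number $\theta^{-(d-1)}$ of $S^{d-1}$ against the defect $\cos\theta$ in directional width, but nothing in that trade-off produces a $2d$-tuple with small-diameter intersection, nor does it interface with the monochromatic-to-colorful transfer: pigeonholing over directions gives many tuples sharing a coarse witness direction, which is the \emph{opposite} of what a contradiction argument needs (you want to exhibit \emph{one} colorful tuple whose intersection is small, not many whose intersections are large in a common direction). The ``assemble $2d$ monochromatic witnesses into one colorful tuple'' step is exactly the missing idea, and as you note, diameter does not behave monotonically under the kind of one-set-at-a-time replacement that drives colorful Helly and colorful Carath\'eodory proofs. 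Absent a new mechanism for that gluing step, this remains an open conjecture, consistent with how the paper presents it.
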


Any Helly-type theorem for diameter necessarily entails some loss---it is not possible to conclude that intersection of the entire family has diameter at least 1 even by checking arbitrarily large subfamilies \cite{Soberon:2016co}.  Sarkar, Xue, and Sober\'on \cite{Sarkar:2019tp} suggested that this may be a consequence of the norm used to measure diameter, and that the $\ell_1$ norm may give exact Helly-type diameter results.  We show that this is indeed the case.

\begin{theorem}\label{thm:Minkowski}
	Let $\rho$ be a Minkowski norm in $\rr^d$ whose unit ball is a polytope with $k$ facets, and let $\ff$ be a finite family of convex sets in $\rr^d$.  If the intersection of every $kd$ or fewer members of $\ff$ has $\rho$-diameter greater than or equal to 1, then $\bigcap \ff$ has $\rho$-diameter greater than or equal to 1.  Moreover, this statement is not true if $kd$ is replaced by $kd-1$.
\end{theorem}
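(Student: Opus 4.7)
The plan is to reduce the $\rho$-diameter to a maximum of widths. Because $B$ is a centrally symmetric polytope with $k$ facets, its polar $B^\circ$ has $k$ vertices arranged in $k/2$ antipodal pairs $\pm u_1, \ldots, \pm u_{k/2}$, so that $\rho(v) = \max_i |\langle u_i, v\rangle|$ and, for every convex $C \subseteq \rr^d$,
$$\diam_\rho(C) \;=\; \max_{1 \le i \le k/2} w_{u_i}(C), \qquad w_u(C) := h_C(u) + h_C(-u).$$
The theorem thereby reduces to a statement about the finite collection of widths $w_{u_i}$.

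The core of the proof is a single-direction diameter Helly lemma: for any fixed $u \in \rr^d$, if $w_u(\bigcap \mathcal H) \ge 1$ for every $\mathcal H \subseteq \ff$ with $|\mathcal H| \le 2d$, then $w_u(\bigcap \ff) \ge 1$. I would prove this by applying Helly's theorem in $\rr^{2d}$ to the lifted family $\{K_u\} \cup \{C \times C : C \in \ff\}$, where $K_u = \{(x,y) \in \rr^{2d} : \langle u, x - y\rangle \ge 1\}$ is a half-space; a nonempty intersection of this lifted family is exactly a pair of points in $\bigcap \ff$ whose $u$-separation is at least $1$. Checking $(2d+1)$-subfamilies splits into two cases: those containing $K_u$, for which nonemptiness is precisely the width hypothesis on a $2d$-subfamily; and those consisting of $2d+1$ lifted sets $C \times C$, for which nonemptiness reduces to a $(2d+1)$-wise intersection from $\ff$, which follows from the classical Helly theorem in $\rr^d$ after observing that every $(d+1)$-subfamily of $\ff$ sits inside some $2d$-subfamily of positive width.

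With the lemma in hand, assume for contradiction that $\diam_\rho(\bigcap \ff) < 1$. Then $w_{u_i}(\bigcap \ff) < 1$ for every $i$, so the contrapositive of the lemma yields subfamilies $\mathcal H_i \subseteq \ff$ with $|\mathcal H_i| \le 2d$ and $w_{u_i}(\bigcap \mathcal H_i) < 1$. Their union $\mathcal H = \bigcup_{i=1}^{k/2} \mathcal H_i$ satisfies $|\mathcal H| \le kd$, and since $\bigcap \mathcal H \subseteq \bigcap \mathcal H_i$ gives $w_{u_i}(\bigcap \mathcal H) \le w_{u_i}(\bigcap \mathcal H_i) < 1$ for every $i$, we conclude $\diam_\rho(\bigcap \mathcal H) < 1$, contradicting the hypothesis.

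For the sharpness of $kd$, the plan is to build, for each $u_i$, a bipyramid $B_i \subseteq \rr^d$ whose axis is $\rr u_i$, with apexes at $\pm (1/2-\epsilon)u_i$ and a $(d-1)$-simplicial equator in $u_i^\perp$ chosen to be large in every transverse direction. Each $B_i$ is the intersection of exactly $2d$ half-spaces, one per facet, and each of these half-spaces is critical for the $u_i$-width of $B_i$ in the sense that removing it makes the region unbounded along direction $u_i$. Letting $\ff$ be the union of the $kd$ bounding half-spaces across all $k/2$ bipyramids, the full intersection lies inside every $B_i$, so has $u_i$-width below $1$ for each $i$ and therefore $\rho$-diameter strictly less than $1$. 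Removing one half-space, say from the $B_{i_0}$-block, unlocks an unbounded $u_{i_0}$-extension through the corresponding corner; provided the transverse equators of the other $B_j$ are chosen large enough, this extension survives in the full intersection and forces the $u_{i_0}$-width above $1$. The main obstacle is exactly this last step---ensuring that the unrelated bipyramids $B_j$ with $j \ne i_0$ do not clip the restored $u_{i_0}$-extension---which is a careful but manageable piece of geometric bookkeeping about how elongated the equatorial simplices must be.
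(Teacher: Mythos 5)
Your argument for the Helly direction is correct and follows essentially the same route as the paper: reduce $\rho$-diameter to a maximum of $k/2$ directional widths, prove a one-directional width Helly theorem by lifting to (a subspace of) $\rr^{2d}$, and combine per-direction witnesses via a union/contrapositive step. Your one-directional lemma is the paper's \cref{thm:v-width}; your lifted family with the auxiliary half-space $K_u$ in $\rr^{2d}$ is a cosmetic variant of the paper's lift $S(K)=\{(x,y):x\in K,\ x+y\in K,\ \langle y,v\rangle=1\}$ into a $(2d-1)$-dimensional affine subspace.

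The sharpness construction, however, does not work in the form described, and the step you flag as ``careful but manageable bookkeeping'' is exactly where it breaks down. An apex of a bipyramid over a $(d-1)$-simplex is the meeting of $d$ facets, so deleting a \emph{single} one of those facets does not unlock the region past the apex in the axis direction; the opened region recedes along the remaining edges through the apex, whose direction tends to $u_{i_0}^\perp$ as the equator grows. Already for $d=2$ and the $\ell_\infty$ norm (so $k=4$, $u_1=(1,0)$, $u_2=(0,1)$): take $B_1$ to be the rhombus with vertices $(\pm(1/2-\epsilon),0)$ and $(0,\pm L)$ and remove its upper-right edge. Any point of the opened region that also lies in $B_2$ (which confines $y$ to roughly $[-(1/2-\epsilon),\,1/2-\epsilon]$) has $x$-coordinate at most $(1/2-\epsilon)\big(1+(1/2-\epsilon)/L\big)$, which tends to $1/2-\epsilon$ as $L\to\infty$. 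So the $u_1$-width of the $(kd-1)$-fold intersection tends to $1-2\epsilon<1$, its $u_2$-width is still at most $1-2\epsilon$ because $B_2$ is intact, and its $\rho$-diameter is strictly less than $1$ --- the opposite of what a counterexample requires. Enlarging the other equators cannot help, since the clipping comes from the bounded $u_j$-width of the untouched $B_j$, not from its transverse extent. The paper's construction sidesteps this by taking half-spaces that all contain the unit ball $P$, with $d$ of them assigned to each facet $L_i$ so that their common intersection pinches $P$ down to a single point $x_i\in L_i$ while the intersection of any $d-1$ of them still pokes past $L_i$. Then $\bigcap\ff\subseteq P$ has $\rho$-diameter at most $2$, whereas every $(kd-1)$-subfamily contains both $-x_i$ and a point slightly beyond $x_i$ and so has $\rho$-diameter strictly greater than $2$; rescaling gives the threshold $1$. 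Because all the half-spaces there are supporting half-spaces of a single common body, there is no inter-block clipping to control.
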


In particular, there is an exact diameter Helly-type theorem in the $\ell_1$-norm, although the intersection condition on subfamilies is necessarily exponential. We present three proofs of \cref{thm:Minkowski}, one of which implies a colorful version (see Theorem \ref{thm:Minkowski-colorful}).  In \cref{thm:Minkowski-not-polytope}, we prove that no other Minkowski norm admits an exact Helly-type theorem for diameter, thus characterizing the norms for which an exact theorem is possible.

The particular case of the $\ell_\infty$ norm implies a different relaxation of Conjecture \ref{conj:BKP}.

\begin{corollary}
	Let $\ff$ be a finite family of convex sets in $\rr^d$.  If the intersection of every $2d^2$ or fewer elements of $\ff$ has diameter greater than or equal to 1, then $\bigcap \ff$ has diameter greater than or equal to $d^{-1/2}$.
\end{corollary}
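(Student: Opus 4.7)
The plan is to obtain this as a direct specialization of \cref{thm:Minkowski} with $\rho = \ell_\infty$, together with two elementary inequalities between $\ell_2$ and $\ell_\infty$ norms.

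First I would note that the unit ball of the $\ell_\infty$ norm on $\rr^d$ is the cube $[-1,1]^d$, which is a polytope with exactly $k = 2d$ facets. Plugging this into \cref{thm:Minkowski} produces the subfamily threshold $kd = 2d^2$, matching the hypothesis of the corollary. So \cref{thm:Minkowski} yields the following exact statement: \emph{if every $2d^2$ or fewer members of $\ff$ have $\ell_\infty$-diameter at least $t$, then $\bigcap \ff$ has $\ell_\infty$-diameter at least $t$.}

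The remaining step is to convert between Euclidean diameter and $\ell_\infty$-diameter using the standard pointwise inequalities
\[
\|x-y\|_\infty \;\leq\; \|x-y\|_2 \;\leq\; \sqrt{d}\,\|x-y\|_\infty.
\]
The right-hand inequality turns the hypothesis---every $2d^2$ or fewer sets have Euclidean diameter at least $1$---into the statement that every $2d^2$ or fewer sets have $\ell_\infty$-diameter at least $1/\sqrt{d}$. Applying the rescaled consequence of \cref{thm:Minkowski} with $t = 1/\sqrt{d}$ shows that $\bigcap \ff$ has $\ell_\infty$-diameter at least $d^{-1/2}$. The left-hand inequality then gives Euclidean diameter at least $d^{-1/2}$, which is the desired conclusion.

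There is no real obstacle here: the content of the corollary lies entirely in \cref{thm:Minkowski}, and the work consists of confirming that the $\ell_\infty$ unit ball has $2d$ facets (so that $kd = 2d^2$) and tracking the factor of $\sqrt{d}$ that arises from the norm comparison. The only place to exercise care is ensuring the direction of each norm inequality matches the direction of the implication being transported.
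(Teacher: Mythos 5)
Your proof is correct and matches the paper's approach exactly: the corollary is stated as "the particular case of the $\ell_\infty$ norm," and the paper's own proof (given in the more general form of \cref{thm:lp}) is precisely the specialization of \cref{thm:Minkowski} to the $2d$-facet cube, combined with the norm comparison $\|\cdot\|_\infty \leq \|\cdot\|_2 \leq \sqrt{d}\,\|\cdot\|_\infty$. Your care about the direction of each inequality is the one nontrivial check, and you carry it out correctly.
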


If we relax \cref{conj:BKP} to checking subfamilies of quadratic cardinality in the dimension, then an application of N\'aszodi's method guarantees that the diameter of $\bigcap \ff$ is at least $d^{-1}$ (see, e.g., \cite{Vidal:2020iw}*{Theorem 1.4}). To obtain a bound of $d^{-1/2}$, however, the method would require that each set be centrally symmetric.

Finally, we investigate a discrete analogue of diameter Helly-type theorems.  Doignon extended Helly's theorem to the integer lattice \cite{Doignon:1973ht}, showing that \textit{if the intersection of every $2^d$ or fewer elements of a finite family of convex sets in $\rr^d$ contains an integer point, then the entire intersection also contains an integer point.} This result was proved independently by Bell \cite{Bell:1977tm} and by Scarf \cite{Scarf:1977va}.  In most cases, the aim of quantitative Helly-type theorems for the integer lattice is to bound the number of integer points in the intersection of a family of convex sets \cites{Aliev:2016il, Averkov:2017ge, DeLoera:2017bl, Dillon:2020ab}.

Such work can be thought of as Helly-type theorems for ``discrete volume''\!\!\!. We think of a convex set as having large ``discrete diameter'' if it contains many colinear integer points. In contrast to most continuous diameters, there is an exact Helly-type theorem for discrete diameter.

\begin{theorem}\label{thm:diameter-discrete}
	Let $k$ be a positive integer and $\ff$ be a finite family of convex sets in $\rr^d$.  If the intersection of every $4^d$ or fewer elements of $\ff$ contains $k$ colinear integer points, then $\bigcap \ff$ contains $k$ colinear integer points.
\end{theorem}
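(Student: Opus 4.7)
The plan is to lift the problem to $\rr^{2d}$ and apply Doignon's theorem for an appropriate full-rank sublattice. The key reformulation is that a convex set $C \subseteq \rr^d$ contains $k$ colinear integer points if and only if $C$ contains two distinct integer points $p, q$ with $q - p \in (k-1)\zz^d$: the forward direction extracts the extreme points of a $k$-term colinear configuration (whose primitive-direction difference is a multiple of $k-1$), and the backward direction uses convexity to interpolate the integer points $p + j(q-p)/(k-1)$ for $j = 0, 1, \ldots, k-1$. For each $C \in \ff$, set $\widehat{C} = C \times C \subseteq \rr^{2d}$, which is intersection-compatible in the sense that $\bigcap_{C \in \mathcal{H}} \widehat{C} = (\bigcap \mathcal{H}) \times (\bigcap \mathcal{H})$, and define
\[
L = \{(p, q) \in \zz^{2d} : q - p \in (k-1)\zz^d\}, \qquad \Delta = \{(p, p) : p \in \zz^d\}.
\]
Here $L$ is a full-rank sublattice of $\zz^{2d}$ of index $(k-1)^d$, and the reformulation reads: $C$ contains $k$ colinear integer points if and only if $\widehat{C}$ contains a point of $L \setminus \Delta$.

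Doignon's theorem applied to the full-rank lattice $L$ in $\rr^{2d}$ has Helly number $2^{2d} = 4^d$. The hypothesis says every $4^d$-subfamily of $\widehat{\ff}$ has a common $(L \setminus \Delta)$-point, hence a common $L$-point; Doignon then yields some $L$-point $(p_*, q_*) \in K \times K$, where $K = \bigcap \ff$. If $p_* \neq q_*$, we are done, as $p_*$ and $q_*$ are two integer points in $K$ with $q_* - p_* \in (k-1)\zz^d \setminus \{0\}$, witnessing $k$ colinear integer points in $K$.

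The main obstacle is to rule out the degenerate case $p_* = q_*$, in which Doignon yields only an integer point of $K$ on the diagonal. I plan to handle this by a minimal-counterexample argument. Assuming $K$ contains no $k$ colinear integer points, let $\mathcal{G} \subseteq \ff$ be a minimal subfamily whose intersection also contains none; by hypothesis $|\mathcal{G}| \geq 4^d + 1$. For each $C \in \mathcal{G}$, the intersection $\bigcap(\mathcal{G} \setminus \{C\})$ does contain $k$ colinear integer points, giving an off-diagonal witness $(p_C, q_C) \in (L \setminus \Delta) \cap \bigcap_{C' \in \mathcal{G}, C' \neq C} \widehat{C'}$ lying outside $\widehat{C}$. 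Applying the integer Radon theorem of Bell and Scarf for the sublattice $L$ of $\rr^{2d}$ to these $|\mathcal{G}| \geq 2^{2d} + 1$ witnesses produces a partition $\mathcal{G} = \mathcal{A} \sqcup \mathcal{B}$ and an $L$-point in $\conv\{(p_C, q_C) : C \in \mathcal{A}\} \cap \conv\{(p_C, q_C) : C \in \mathcal{B}\}$, and a standard convexity argument (each summand is witnessed in every $\widehat{C'}$ with $C'$ in the opposite part) places this point in $(\bigcap \mathcal{G}) \times (\bigcap \mathcal{G})$. The crux of the proof, and the part I expect to be genuinely hard, is ensuring this Radon-produced point is off-diagonal, so that $\bigcap \mathcal{G}$ is forced to contain $k$ colinear integer points, contradicting our choice of $\mathcal{G}$. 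I anticipate resolving this by exploiting the off-diagonal nature of every witness $(p_C, q_C)$, possibly through a refined choice of Radon partition or by combining with a secondary application of Doignon's theorem along coordinate halfspaces such as $\{q_j - p_j \geq k-1\}$ and $\{q_j - p_j \leq -(k-1)\}$, whose union covers $L \setminus \Delta$.
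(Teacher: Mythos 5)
Your high-level plan — lift to $\rr^{2d}$ and apply Doignon's theorem — matches the paper's, and your reformulation of ``contains $k$ colinear integer points'' as ``$C \times C$ meets $L \setminus \Delta$'' (with $L = \{(p,q):q-p\in(k-1)\zz^d\}$) is correct. But the proof has a genuine gap exactly where you flag it: Doignon's theorem for $L$ gives a common $L$-point of $(\bigcap\ff)\times(\bigcap\ff)$, and nothing rules out a diagonal outcome $p_*=q_*$. Your minimal-counterexample/integer-Radon patch does not close this, because an integer Radon point formed from off-diagonal witnesses can very well land on the diagonal. A concrete small example with $d=1$, $k$ odd: the witnesses $(0,k-1)$ and $(k-1,0)$ in $L=\zz^2$ average to $(\tfrac{k-1}{2},\tfrac{k-1}{2})\in\Delta$. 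Swapping some $(p_C,q_C)\mapsto(q_C,p_C)$ does not obviously fix this, since there may be no single coordinate or halfspace that can be made uniform across all witnesses, and the ``union of $2d$ coordinate halfspaces'' idea runs into the problem that Doignon applies to convex (hence not union-type) constraints. So, as written, the degenerate case is not handled.

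The paper sidesteps the degeneracy with one clean idea you are missing: instead of the symmetric lift $C\times C$, use $S(K)=\{(x,y):x\in K,\ x+(k-1)y\in K,\ \langle v,y\rangle>0\}$ for a vector $v$ with algebraically independent coordinates. The strict half-space constraint $\langle v,y\rangle>0$ is convex and automatically excludes $y=0$ (your diagonal). Genericity of $v$ guarantees $\langle v,y\rangle\neq 0$ for every nonzero $y\in\zz^d$, so any $k$-term colinear configuration $x,x+y,\dots,x+(k-1)y$ in $K$ produces a lattice witness in $S(K)$ after possibly replacing $(x,y)$ by $(x+(k-1)y,-y)$. Then ordinary Doignon in $\zz^{2d}$ (no sublattice needed, since the reparametrization by the step $y$ absorbs the factor $k-1$) yields a common integer point $(x,y)$ of all $S(K)$ with $y\neq 0$, giving the $k$ colinear points. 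Incorporating this asymmetric half-space constraint into your $\widehat{C}$ — e.g.\ replacing it with $\{(p,q)\in C\times C:\langle v,q-p\rangle>0\}$ — would repair your argument and make it essentially the paper's proof.
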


Our proof also implies a colorful version of Theorem \ref{thm:diameter-discrete}. Doignon's theorem shows that the size of the subfamilies in the hypothesis is necessarily exponential in the dimension, but this size can be significantly reduced if it suffices to maintain a bound on the diameter of a large subfamily of $\mathcal F$.

\begin{corollary}\label{cor:discrete-fractional}
	For every positive integer $d$ and real number $\alpha \in (0,1]$, there exists a real number $\beta = \beta(\alpha, d) > 0$ such that the following holds.  Assume that $\ff$ is a finite family of convex sets in $\rr^d$ and let $k$ be a positive integer. If $\bigcap \mathcal H$ contains at least $k$ colinear integer points for at least $\alpha \binom{|\ff|}{2d+1}$ subcollections $\mathcal H \subseteq \mathcal F$ of $2d+1$ sets, then there exists a subfamily $\mathcal G \subseteq \ff$ such that $|\mathcal G| \ge \beta |\ff|$ whose intersection contains $k$ colinear integer points.
\end{corollary}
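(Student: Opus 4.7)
The plan is to reduce the corollary to the classical Katchalski--Liu fractional Helly theorem in $\rr^{2d}$ by means of a lifting. For each $C \in \ff$, define the convex set
\[
\hat{C} \;:=\; \bigl\{(p,v) \in \rr^d \times \rr^d : p + i v \in C \text{ for all } i = 0, 1, \ldots, k-1\bigr\} \subseteq \rr^{2d};
\]
it is convex because each defining condition is the preimage of $C$ under the linear map $(p,v)\mapsto p+iv$.

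The essential observation is the following dictionary: for any $\mathcal H \subseteq \ff$, the intersection $\bigcap \mathcal H$ contains $k$ colinear integer points if and only if $\bigcap_{C \in \mathcal H} \hat{C}$ contains a point $(p,v) \in \zz^{2d}$ with $v \neq 0$. Indeed, such an integer witness yields the $k$ distinct colinear integer points $p, p+v, \ldots, p+(k-1)v$ on a line in $\bigcap \mathcal H$, and conversely, $k$ colinear integer points with primitive step $u$ provide the integer witness $(p_0, u)$ with $u \neq 0$. In particular the hypothesis produces $\alpha \binom{|\ff|}{2d+1}$ many $(2d+1)$-tuples with $\bigcap \hat{C}$ non-empty, and since $2d+1$ is exactly the Helly number of convex sets in $\rr^{2d}$, applying Katchalski--Liu fractional Helly to $\{\hat{C} : C \in \ff\}$ yields a subfamily $\mathcal G' \subseteq \ff$ of size at least $\beta'(\alpha,d)\,|\ff|$ such that $\bigcap_{C \in \mathcal G'} \hat{C} \neq \emptyset$.

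The main obstacle is then to upgrade ``$\bigcap \hat{C}$ non-empty'' to ``$\bigcap \hat{C}$ contains an integer point with $v \neq 0$,'' since the common point supplied by fractional Helly may be neither integral nor satisfy $v \neq 0$. To close this gap I would exploit the explicit integer witnesses carried by the good $(2d+1)$-tuples of $\ff$: by a pigeon-hole argument on these witnesses (reducing modulo a bounded integer to control the effective witness space, which is made finite by first restricting to a large bounded ambient region) one can extract a linear-sized subfamily $\mathcal G \subseteq \mathcal G'$ whose members all share a common integer witness $(p_0, v_0)$ with $v_0 \neq 0$. Alternatively, one may combine the fractional extraction with a Doignon-style integer Helly argument applied to the sliced convex bodies $\{C \cap (C-v_0) \cap \cdots \cap (C-(k-1)v_0)\}_{C}$ in $\rr^d$, for a density-balanced primitive direction $v_0$ identified from the witnesses. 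I expect this integer-extraction step---bridging the combinatorial non-emptiness furnished by fractional Helly in $\rr^{2d}$ to the lattice structure required by the original property---to require the bulk of the technical work, whereas the lifting and the fractional Helly application are essentially formal.
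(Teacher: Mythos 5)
Your plan has a real gap, and it stems from applying the wrong black box. You invoke the Katchalski--Liu fractional Helly theorem (which guarantees a large subfamily with nonempty \emph{real} intersection), but the property you need to propagate is ``contains an integer point with $v\neq 0$.'' Passing from nonemptiness to integrality is not a routine afterthought, and the pigeonhole sketch you offer does not close the gap: the integer witnesses carried by the good $(2d+1)$-tuples need not coincide, need not be bounded, and nothing forces a linear fraction of $\mathcal G'$ to share one. Your fallback remark about ``a Doignon-style integer Helly argument'' gestures toward the right object, but as stated it is circular, since reapplying Doignon would again require checking exponentially many subfamilies while you only have fractional information.

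The paper sidesteps all of this by using a different black box: the B\'ar\'any--Matou\v{s}ek \emph{fractional Doignon} theorem, which says that if $\alpha\binom{|\mathcal F|}{d+1}$ of the $(d+1)$-tuples have an integer point in their intersection, then a $\beta|\mathcal F|$-sized subfamily does. Applied in $\rr^{2d}$, the relevant tuple size is $2d+1$, which is exactly why the corollary is phrased with $\binom{|\ff|}{2d+1}$. With that theorem in hand, the corollary is a one-step consequence of the same lifting used for Theorem~\ref{thm:diameter-discrete}: no extra integer-extraction step is needed. The paper also handles the $v\neq 0$ issue more cleanly than you do. Instead of your set $\hat C$ (which contains the entire slice $\{(p,0):p\in C\}$, so $v=0$ is a live danger), the paper uses
\[
S(K)=\{(x,y): x\in K,\ x+(k-1)y\in K,\ \langle v,y\rangle>0\}
\]
for a fixed vector $v$ with algebraically independent coordinates. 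Genericity of $v$ guarantees $\langle v,z\rangle\neq 0$ for all $z\in\zz^d\setminus\{0\}$, so any integer point $(x,y)$ of $S(K)$ automatically has $y\neq 0$, and because $\langle v,y\rangle\neq 0$ for every integer witness, each witness or its reflection $(x+(k-1)y,-y)$ lies in $S(K)$. This makes the dictionary between ``$k$ colinear integer points in $\bigcap\mathcal H$'' and ``integer point in $\bigcap S(K)$'' exact with no side condition to track. Your lifting idea is the right instinct, but you need the lattice version of fractional Helly, not the real one.
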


Since Doignon's theorem is optimal, the proportion $\beta(1,d)$ is necessarily strictly less than 1.

We present the proof of Theorem \ref{thm:diameter-discrete} in Section \ref{sec:discrete}.  Our results for Minkowski norms are collected in Section \ref{sec:minkowski}, and \cref{sec:frac} contains the proofs of theorems \ref{thm:diameter-fractional} and \ref{thm:diameter-fractional-colorful}.

\section{Discrete diameter results}\label{sec:discrete}

The proofs in this section employ similar methods to those of Sarkar, Xue, and Sober\'on in \cite{Sarkar:2019tp}, in which a suitable parametrization reduces quantitative Helly-type theorems to standard Helly-type theorems in higher-dimensional spaces.

We denote the standard inner product in $\rr^d$ by $\langle \cdot, \cdot \rangle$.

\begin{proof}[Proof of Theorem \ref{thm:diameter-discrete}]
Let $v \in \rr^d$ be a vector whose components are algebraically independent.  In particular, $\langle v, z \rangle \neq 0$ for every $z \in \zz^d \setminus \{0\}$.  For every convex set $K \subseteq \rr^d$, we define the set
	\[
	S(K) =\big\{(x,y) \in \rr^d \times \rr^d : x \in K, \ x+(k-1)y \in K, \ \langle v, y \rangle > 0\big\},
	\]
which is convex.  Moreover, if $x \in K$, $x + (k-1)y \in K$, and $\langle y, v \rangle \neq 0$, then either $(x, y) \in S(K)$ or $(x+(k-1)y, -y) \in S(K)$.  Now consider the family
\[
    \mathcal{G} = \{S(K) : K \in \ff\}.
\]
The conditions of the theorem imply that the intersection of every $2^{2d}$ or fewer sets in $\mathcal{G}$ contains a point of $\zz^{2d}$ in their intersection.  By Doignon's theorem, $\bigcap \mathcal{G}$ contains an integer point.  If $(x, y)$ is such a point, then $y \neq 0$ and the $k$ colinear integer points $x, x+y, \ldots, x+(k-1)y$ are contained in every member of $\ff$.
\end{proof}

The proof above is quite malleable.  For example, replacing Doignon's theorem with its colorful version (proved by De Loera, La Haye, Oliveros, and Rold\'an-Pensado \cite{DeLoera:2017th}) yields a colorful version of Theorem \ref{thm:diameter-discrete}.  To obtain Corollary \ref{cor:discrete-fractional}, we replace Doignon's theorem by the following fractional version.

\begin{theorem}[B\'ar\'any, Matou\v{s}ek 2003 \cite{Barany:2003wg}]
    For every positive integer $d$ and real number $\alpha \in (0,1]$ there exists a real number $\beta = \beta(\alpha,d) > 0$ such that the following is true. If $\mathcal F$ is a finite family of convex sets such that $\bigcap \mathcal H$ contains an integer point for at least $\alpha \binom{\lvert \mathcal F\rvert}{d+1}$ subcollections $\mathcal H \subseteq \mathcal F$ of $d+1$ sets, then there is a subfamily $\mathcal G \subseteq \mathcal F$ with at least $\beta \lvert \mathcal F \rvert$ sets whose intersection contains an integer point.
\end{theorem}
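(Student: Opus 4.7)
My plan is to follow the Katchalski--Liu--Kalai strategy for the classical fractional Helly theorem, substituting Doignon's theorem for Helly's theorem at the key step where a Helly-type conclusion is invoked.

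First I would observe that the hypothesis implies the classical fractional Helly hypothesis: since ``contains an integer point'' implies ``nonempty,'' at least $\alpha \binom{|\mathcal F|}{d+1}$ subfamilies of size $d+1$ have nonempty intersection. Katchalski--Liu's fractional Helly theorem then yields a subfamily $\mathcal G_0 \subseteq \mathcal F$ of size $|\mathcal G_0| \ge \beta_0(\alpha,d)\cdot |\mathcal F|$ whose full intersection is nonempty. That common point need not be an integer point, however, so a further refinement is required.

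Second, within $\mathcal G_0$ I would work with the \emph{integer nerve}, i.e.\ the simplicial complex whose faces are the subfamilies of $\mathcal G_0$ admitting a common integer point. By Doignon's theorem applied to each candidate face, this complex is $2^d$-Leray. A pigeonhole argument shows that a positive fraction of the $(d+1)$-subfamilies appearing in the hypothesis lie inside $\mathcal G_0$, so the $(d+1)$-skeleton of the integer nerve is dense. A Kalai-style upper bound theorem for $2^d$-Leray complexes---or, equivalently, Matou\v{s}ek's fractional Helly theorem for set systems of bounded VC-dimension applied to the hypergraph with edges $\{F \in \mathcal F : p \in F\}$ indexed by $p \in \zz^d$---then forces a face of linear size. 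Such a face is a subfamily $\mathcal G \subseteq \mathcal F$ with $|\mathcal G| \ge \beta |\mathcal F|$ and a common integer point, giving the conclusion.

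The main obstacle will be bridging the mismatch between the hypothesis size $d+1$ and Doignon's Helly number $2^d$: in the classical setting these parameters coincide, which is precisely what makes the Kalai-style upper bound argument directly applicable. Here one must either amplify from $(d+1)$-wise to $2^d$-wise integer intersection via a combinatorial averaging or random sampling step before invoking Kalai, or bypass this mismatch by appealing to the general Alon--Kleitman--Matou\v{s}ek framework, which tolerates such asymmetries provided the set system has bounded VC-dimension---a property satisfied by the integer-point hypergraph of convex sets in $\rr^d$. Carefully tracking how $\beta$ depends on $\alpha$ and $d$ through either route will be the technical heart of the argument.
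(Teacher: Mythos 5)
First, a point of orientation: the paper does not prove this statement. It is quoted verbatim from B\'ar\'any and Matou\v{s}ek (2003) and used as a black box to deduce Corollary \ref{cor:discrete-fractional}, so there is no in-paper proof to compare against; I can only assess your sketch on its own terms.

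Your sketch has a genuine gap, and it is exactly the one you flag at the end: bridging the fractional Helly hypothesis on $(d+1)$-tuples with Doignon's Helly number $2^d$. Everything before that point is either standard or a detour, and neither of your two proposed bridges works as stated. The ``Kalai-style upper bound for $2^d$-Leray complexes'' route yields a fractional Helly theorem whose \emph{hypothesis} is on $2^d$-tuples, which is a strictly weaker statement than the one to be proved; amplifying from ``many $(d+1)$-tuples share a lattice point'' to ``many $2^d$-tuples share a lattice point'' is not a routine averaging or sampling step---it is the entire content of the B\'ar\'any--Matou\v{s}ek paper and requires new geometric input about convex lattice sets (their argument proceeds by induction on dimension via a structural lemma for families in which every $d+1$ members share a lattice point). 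The alternative route is blocked by a false premise: the set system of convex lattice sets does \emph{not} have bounded VC dimension, since any set of lattice points in convex position is shattered by convex sets, so Matou\v{s}ek's bounded-VC fractional Helly theorem does not apply off the shelf (one would instead need a polynomial bound of degree $d$ on the \emph{dual} shatter function, which is not established here and is not straightforward). Separately, your first step is unsound: after passing to $\mathcal G_0$ with $\lvert \mathcal G_0\rvert \ge \beta_0 n$ via the continuous fractional Helly theorem, there is no pigeonhole guarantee that a positive fraction (or even one) of the $\alpha\binom{n}{d+1}$ good $(d+1)$-tuples lies inside $\mathcal G_0$---the number of $(d+1)$-subsets of $\mathcal G_0$ is only about $\beta_0^{d+1}\binom{n}{d+1}$, and nothing forces the good tuples to concentrate there. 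Fortunately that step is also unnecessary; the argument should engage directly with the lattice-point structure rather than first extracting a continuously intersecting subfamily.
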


It is unclear whether the number $4^d$ in \cref{thm:diameter-discrete} is optimal. Since the case $k=1$ is Doignon's theorem, $4^d$ cannot be replaced by anything smaller than $2^d$. The following construction, which generalizes a construction communicated by Gennadiy Averkov for $d=2$, improves the lower bound to $d2^d$.

\begin{claim}
For each $d \geq 1$, there exists a finite family $\mathcal F$ of convex sets in $\rr^d$ such that the intersection of any $d2^d-1$ sets in $\mathcal F$ contains 3 colinear integer points but $\bigcap \mathcal F$ does not.
\end{claim}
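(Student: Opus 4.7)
The plan is to generalize the standard tightness construction for Doignon's theorem. I will index $d\cdot 2^d$ convex sets by pairs $(v,i) \in \{0,1\}^d \times [d]$, and to each such pair I attach an ``outside'' integer point
\[
	p_{v,i} := v + (2v_i - 1)\, e_i,
\]
whose $i$-th coordinate lies in $\{-1,2\}$ while its other coordinates lie in $\{0,1\}$. The key geometric observation is that $v - e_i,\ v,\ v + e_i$ are three colinear integer points, exactly one of which equals $p_{v,i}$ while the other two belong to $\{0,1\}^d$. I then define
\[
	K_{v,i} := \conv\bigl(\{0,1\}^d \cup \{p_{v',i'} : (v',i') \ne (v,i)\}\bigr),
\]
giving $d\cdot 2^d$ convex sets in total.

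The subfamily condition is immediate from the construction: for any fixed $(v,i)$ and any $(v',i') \ne (v,i)$, the set $K_{v',i'}$ contains $\{0,1\}^d \cup \{p_{v,i}\}$, so the intersection of the other $d\cdot 2^d-1$ sets contains the three colinear integer points $v - e_i,\ v,\ v + e_i$. It remains to verify that $\bigcap_{(v,i)} K_{v,i}$ does \emph{not} contain three colinear integer points. I claim the only integer points in this full intersection are the vertices $\{0,1\}^d$, and these contain no three colinear points: if $u,v,w \in \{0,1\}^d$ are distinct and lie on a common line, then writing $w = u + t(v-u)$ forces $t \in \zz$ by primitivity of $v-u \in \{-1,0,1\}^d \setminus \{0\}$, and then $w = 2v - u \in \{0,1\}^d$ forces $u = v$.

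The main obstacle is therefore the claim that the integer points of $\bigcap_{(v,i)} K_{v,i}$ are exactly $\{0,1\}^d$. Each $K_{v,i}$ sits inside $[-1,2]^d$, so any candidate integer point lies in $\{-1,0,1,2\}^d$. The tool I would invoke is the elementary extremality principle that if $q = \sum \lambda_j q_j$ is a convex combination and the $i$-th coordinate of $q$ equals the minimum (or maximum) of $(q_j)_i$, then only the $q_j$ attaining that extreme value can carry positive weight. Applied with extreme value $-1$ or $2$, this yields two facts. First, each $p_{v,i}$ is excluded from its own $K_{v,i}$: on the hyperplane $\{x_i = -1\}$ or $\{x_i = 2\}$ containing $p_{v,i}$, the generators of $K_{v,i}$ form a $(d-1)$-dimensional cube with $p_{v,i}$ deleted, and $p_{v,i}$ is not a convex combination of the other cube vertices. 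Second, no integer point with two coordinates in $\{-1,2\}$ lies in any $K_{v,i}$, since no generator has two coordinates outside $\{0,1\}$. Together these show that the integer points of $\bigcap_{(v,i)} K_{v,i}$ are precisely $\{0,1\}^d$, completing the proof.
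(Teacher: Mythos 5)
Your proposal is correct and, after translating by $(1,\dots,1)$ so that $\{0,1\}^d$ becomes $Q=\{1,2\}^d$ and the $p_{v,i}$ become the set $R$, it is exactly the paper's construction $\mathcal F = \{\conv(Q\cup R\setminus\{x\}):x\in R\}$. The extra verification you supply---using the extremality principle to show that the integer points of the full intersection are precisely $\{0,1\}^d$---is sound and fills in a step the paper asserts without proof.
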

\begin{proof}
Let $R \subseteq \{0,1,2,3\}^d$ be the collection of integer points where exactly one coordinate is in $\{0,3\}$ and let $Q = \{1,2\}^d$. That is, $Q \cup R$ is the set of integer points in the hypercube $[0,3]^d$ that do not lie on its $(d-2)$-skeleton. We define 
\[  \mathcal F = \{\conv(Q \cup R \setminus \{x\}) : x \in R \}, \]
which is a collection of $d2^d$ sets. The intersection of any $d2^d - 1$ sets in $\mathcal F$ contains every point in $Q$ and one point in $R$, so it contains 3 colinear integer points. But the integer points in $\bigcap \mathcal F$ are exactly those in $Q$, which does not contain 3 colinear integer points.
\end{proof}

\section{Diameter results for Minkowski norms}\label{sec:minkowski}

\begin{definition}
    Let $K \subseteq \rr^d$ be a compact convex set with nonempty interior that is symmetric about the origin. The \textit{Minkowski norm} of $K$ is defined by\vspace{-0.4em}
    \[  \rho_K(x) = \min\{ t \geq 0 : x \in tK\}.  \]
\end{definition}

Any Minkowski norm is a norm in the usual sense. Given a vector $v \in \rr^d \setminus \{0\}$, the \textit{$v$-width} of a compact convex set $K \subseteq \rr^d$ is $\max_{x,y\in K} \langle x-y, v\rangle$. Let $K \subseteq \rr^d$ polytope with $k$ facets, and for each facet $L_i$, let $v_i$ be the vector such that $\{ x \in \rr^d : \langle x, v_i \rangle = 1\}$ is the hyperplane containing $L_i$. We assume that $L_i$ and $L_{(k/2)+i}$ are opposing facets, so $v_{(k/2)+i} = -v_i$. If $\rho$ is the Minkowski norm whose unit ball is $K$, then
\begin{equation}\label{eq:polytope-v-width}
    \rho(x) = \max_{1 \leq i \leq k/2} \lvert \langle x,v_i\rangle \rvert
\end{equation}
for every $x \in \rr^d$.

The proof in \cref{sec:discrete} can be adapted to simplify the proof of a Helly-type theorem for $v$-width by the second author \cite{Soberon:2016co}.

\begin{theorem}\label{thm:v-width}
	Let $v$ be a nonzero vector in $\rr^d$ and $\ff$ be a finite family of compact convex sets in $\rr^d$.  If the intersection of every $2d$ sets in $\ff$ has $v$-width greater than or equal to 1, then $\bigcap \ff$ has $v$-width greater than or equal to 1.
\end{theorem}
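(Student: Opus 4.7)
The plan is to mirror the parametrization trick from the proof of Theorem \ref{thm:diameter-discrete}, but arrange it so the lifted family lives in an ambient space of dimension exactly $2d-1$. This matches the hypothesis perfectly, since Helly's theorem in $\rr^{2d-1}$ requires checking $2d$-wise intersections.

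Concretely, set $H = \{z \in \rr^d : \langle z, v \rangle = 1\}$, an affine hyperplane of dimension $d-1$. For each $K \in \ff$, define
\[
S(K) = \{(x, z) \in \rr^d \times H : x \in K, \ x+z \in K\}.
\]
Each $S(K)$ is convex, and compactness of $K$ makes it compact. The ambient space $\rr^d \times H$ has dimension $d + (d-1) = 2d-1$.

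The heart of the argument is the equivalence: for any subfamily $\mathcal{G} \subseteq \ff$, the intersection $\bigcap \mathcal{G}$ has $v$-width at least $1$ if and only if $\bigcap_{K \in \mathcal{G}} S(K) \neq \emptyset$. The forward direction is the nontrivial one and uses convexity to rescale. Given $x, y \in \bigcap \mathcal{G}$ with $\langle y - x, v\rangle \geq 1$, let $z = (y-x)/\langle y - x, v\rangle$; then $\langle z, v\rangle = 1$ so $z \in H$, and $x+z$ lies on the segment from $x$ to $y$, hence in $\bigcap \mathcal{G}$ by convexity. Therefore $(x, z) \in \bigcap_{K \in \mathcal{G}} S(K)$. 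The reverse direction is immediate since $(x, z) \in \bigcap_{K \in \mathcal{G}} S(K)$ exhibits two points $x, x+z \in \bigcap \mathcal{G}$ whose $v$-projections differ by $1$.

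With this equivalence, the hypothesis translates directly: every $2d$-wise subfamily of $\{S(K) : K \in \ff\}$ has nonempty intersection. Helly's theorem in $\rr^{2d-1}$ then produces a point $(x, z) \in \bigcap_{K \in \ff} S(K)$, and the pair $x, x+z$ certifies $v$-width$(\bigcap \ff) \geq 1$. The one subtlety to watch for is that the rescaling step in the forward implication really does land inside the convex set—which is precisely where the dimension drop from $2d$ to $2d-1$ is paid for by convexity.
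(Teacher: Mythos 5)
Your proof is correct and follows the same approach as the paper: both lift each $K$ to the convex set $S(K)$ of pairs $(x,z)$ with $x, x+z \in K$ and $\langle z, v\rangle = 1$, observe that this lives in a $(2d-1)$-dimensional affine subspace, and apply Helly there. You spell out the rescaling step (that a $v$-width-$\geq 1$ segment can be shrunk to one of $v$-width exactly $1$ by convexity) which the paper leaves implicit, but the argument is the same.
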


\begin{proof}
	For every convex set $K \subseteq \rr^d$, let
	\[
	S(K) =\big\{(x,y) \in \rr^d \times \rr^d : x \in K, \ x+y \in K, \ \langle y, v \rangle = 1\big\}.
	\]
	The set $S(K)$ is convex.  Consider the set $\mathcal{G} = \{S(K) : K \in \ff\}$; this family is contained in an affine subspace of dimension $2d-1$ by the condition $\langle y, v\rangle = 1$.  The hypothesis of the theorem implies that every $2d$ or fewer elements of $\mathcal{G}$ intersect, so $\bigcap \mathcal G$ is nonempty by Helly's theorem. If $(x,y) \in \bigcap \mathcal G$, then $x,\, x+y \in F$ for every set $F \in \mathcal F$, which shows that the $v$-width of $\bigcap \mathcal F$ is at least 1.
\end{proof}

As before, substituting fractional or colorful versions of Helly's theorem in the proof provides corresponding variants of \cref{thm:v-width}. In fact, we can substitute Helly's theorem with Kalai and Meshulam's topological extension of the colorful Helly theorem \cite{Kalai:2005tb}, which generalizes the coloring structure to an arbitrary matroid. Such an extension is much stronger than the $v$-width results obtained in \cite{Soberon:2016co}.  For later reference, we explicitly state the fractional version of \cref{thm:v-width} derived from the fractional Helly theorem (first proved by Katchalski and Liu \cite{Katchalski:1979bq}).

\begin{theorem}\label{thm:v-width-fractional}
    For every positive integer $d$ and real number $\alpha \in (0,1]$, there exists a real number $\beta = \beta(\alpha, d) > 0$ such that the following holds for every nonzero vector $v \in \rr^d$. If $\bigcap \mathcal H$ contains has $v$-width greater than or equal to 1 for at least $\alpha \binom{|\ff|}{2d}$ subcollections $\mathcal H \subseteq \mathcal F$ of $2d$ sets, then there exists a subfamily $\mathcal G \subseteq \ff$ such that $|\mathcal G| \ge \beta |\ff|$ whose intersection has $v$-width greater than or equal to 1.
\end{theorem}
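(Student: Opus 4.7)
The plan is to mimic the proof of Theorem \ref{thm:v-width} almost verbatim, substituting the fractional Helly theorem of Katchalski and Liu \cite{Katchalski:1979bq} for Helly's theorem. For each $K \in \mathcal F$ set
\[
    S(K) = \big\{(x, y) \in \rr^d \times \rr^d : x \in K,\ x + y \in K,\ \langle y, v\rangle = 1\big\},
\]
and let $\mathcal G_0 = \{S(K) : K \in \mathcal F\}$. Every $S(K)$ is convex and lies in the affine hyperplane $H = \{(x,y) : \langle y, v\rangle = 1\} \subseteq \rr^{2d}$, which is $(2d-1)$-dimensional.

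The first step is to verify that the fractional $v$-width hypothesis for $\ff$ translates into a fractional intersection hypothesis for $\mathcal G_0$. If $\bigcap \mathcal H$ has $v$-width at least $1$ for some $\mathcal H \subseteq \mathcal F$, then there exist $a,b \in \bigcap \mathcal H$ with $t := \langle a-b, v\rangle \geq 1$. Setting $x = b$ and $y = (a-b)/t$ gives $\langle y, v\rangle = 1$, $x \in \bigcap \mathcal H$, and $x+y = b + (a-b)/t$, which lies on the segment from $b$ to $a$ and hence in $\bigcap \mathcal H$ by convexity. Thus $(x,y) \in \bigcap_{K \in \mathcal H} S(K)$, so at least $\alpha\binom{|\mathcal F|}{2d}$ of the $2d$-element subfamilies of $\mathcal G_0$ have nonempty intersection.

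The second step is to invoke the fractional Helly theorem inside the $(2d-1)$-dimensional affine space $H$, in which the Helly number equals $2d$ and matches the cardinality in our hypothesis. This produces a constant $\beta = \beta(\alpha, d) > 0$ (independent of $v$ and $\mathcal F$) and a subfamily $\mathcal G' \subseteq \mathcal G_0$ with $|\mathcal G'| \geq \beta|\mathcal F|$ and $\bigcap \mathcal G' \neq \emptyset$. Any $(x,y)$ in this intersection certifies that the preimage subfamily $\mathcal G \subseteq \mathcal F$ satisfies $x,\, x+y \in \bigcap \mathcal G$ with $\langle (x+y) - x, v\rangle = 1$, so the $v$-width of $\bigcap \mathcal G$ is at least $1$.

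There is essentially no obstacle beyond the dimension bookkeeping: the size of the subfamilies in the hypothesis must align with the Helly number of the ambient affine space in which the $S(K)$ sit, and the $\beta$ inherited from Katchalski and Liu depends only on $\alpha$ and the dimension $2d-1$, hence only on $\alpha$ and $d$ as required.
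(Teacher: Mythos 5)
Your proposal is correct and follows essentially the same route the paper indicates: apply the parametrization $S(K)$ from Theorem~\ref{thm:v-width} and replace Helly's theorem by the fractional Helly theorem of Katchalski and Liu, using that the sets $S(K)$ live in a $(2d-1)$-dimensional affine subspace so that the Helly number is $2d$. The only (inessential) difference is that the paper further notes Kalai's sharp fractional Helly bound yields the explicit estimate $\beta(\alpha,d) \ge 1-(1-\alpha)^{1/2d}$, whereas you only produce an unspecified $\beta(\alpha,d)>0$, which is all the statement requires.
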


Even though the theorem above is already known \cite{Soberon:2016co}, this method allows us to deduce the explicit bound $\beta(\alpha, d) \ge 1-(1-\alpha)^{1/2d}$ by using the results of Kalai \cites{Kalai:1984isa, Kalai:1986ho}. We now use the Helly-type theorem for $v$-width to prove a colorful version of \cref{thm:Minkowski}.

\begin{theorem}\label{thm:Minkowski-colorful}
    Let $\rho$ be a Minkowski norm in $\rr^d$ whose unit ball is a polytope with $k$ facets, and let $\ff_1, \ldots, \ff_{kd}$ be finite families of convex sets in $\rr^d$.  If $\bigcap_{i=1}^{2d}F_i$ has $\rho$-diameter at least $1$ for every $kd$-tuple $(F_i)_{i=1}^{2d}$ such that $F_i \in \ff_i$ for each $i$, then there exists an index $l \in [kd]$ such that $\bigcap \ff_l$ has $\rho$-diameter at least $1$. Moreover, the same statement is not true if $kd$ is replaced by $kd-1$.
\end{theorem}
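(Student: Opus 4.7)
The plan is to reduce the colorful $\rho$-diameter statement to $k/2$ separate colorful $v_j$-width statements using \eqref{eq:polytope-v-width}. Since the unit ball of $\rho$ is centrally symmetric, $k$ is even and the facet normals come in $k/2$ antipodal pairs $\pm v_1, \ldots, \pm v_{k/2}$. Equation \eqref{eq:polytope-v-width} gives that for any compact convex set $C \subseteq \rr^d$, the $\rho$-diameter of $C$ equals the maximum of its $v_j$-widths over $1 \leq j \leq k/2$. In particular, the $\rho$-diameter of $C$ is at least $1$ if and only if its $v_j$-width is at least $1$ for some such $j$.

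I would partition $[kd]$ into $k/2$ blocks $I_1, \ldots, I_{k/2}$, each of size $2d$, pairing block $I_j$ with direction $v_j$. A pigeonhole-by-contradiction argument then shows that some $j$ has the property that every colorful selection $(F_i)_{i \in I_j}$ with $F_i \in \ff_i$ yields an intersection $\bigcap_{i \in I_j} F_i$ of $v_j$-width at least $1$. If no such $j$ existed, for each $j$ one could pick a witness colorful selection $(G_i^{(j)})_{i \in I_j}$ whose intersection has $v_j$-width strictly less than $1$; amalgamating these into a single global coloring $(G_i)_{i \in [kd]}$ produces a $kd$-tuple whose total intersection is contained in every partial intersection $\bigcap_{i \in I_j} G_i^{(j)}$, hence has $v_j$-width below $1$ for every $j$, and therefore $\rho$-diameter below $1$---contradicting the hypothesis.

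Having isolated such a block $I_j$, I would apply the colorful version of \cref{thm:v-width}, which (as the excerpt notes) is obtained by substituting the colorful Helly theorem for Helly's theorem in the proof of that result. Applied to the $2d$ families $(\ff_i)_{i \in I_j}$ with the fixed direction $v_j$, it yields an index $l \in I_j$ such that $\bigcap \ff_l$ has $v_j$-width at least $1$, and therefore $\rho$-diameter at least $1$, as desired.

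For the sharpness clause, I would invoke the tightness half of \cref{thm:Minkowski} to produce a family $\mathcal F$ in which every subfamily of size at most $kd-1$ has intersection of $\rho$-diameter at least $1$ while $\bigcap \mathcal F$ has $\rho$-diameter less than $1$. Setting $\ff_i = \mathcal F$ for every $i \in [kd-1]$ works: any colorful $(kd-1)$-tuple is a multiset of at most $kd-1$ distinct members of $\mathcal F$, so its intersection still has $\rho$-diameter at least $1$, while $\bigcap \ff_l = \bigcap \mathcal F$ has $\rho$-diameter below $1$ for every $l$. The only genuinely new step is the block-pigeonhole reduction in the second paragraph; once that is in hand, the colorful Helly input and the sharpness of \cref{thm:Minkowski} already developed in \cref{sec:minkowski} finish the argument immediately.
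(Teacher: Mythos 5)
Your proof is correct and takes essentially the same route as the paper's, just read in the opposite logical direction. The paper argues by contrapositive: assuming every $\bigcap \ff_i$ has $\rho$-diameter at most $1$, it applies the contrapositive of the colorful $v$-width theorem block by block (the $2d$ families indexed by $I_j$ paired with direction $v_j$) to extract, for each $j$, a colorful selection whose intersection has small $v_j$-width, then amalgamates these disjoint selections into one bad $kd$-tuple. Your forward version first uses exactly that amalgamation to establish (by contradiction) that some block $I_j$ is ``good'' --- every colorful selection from it already has $v_j$-width at least $1$ --- and then applies the colorful $v$-width theorem to that block. Same decomposition of $[kd]$ into $k/2$ blocks of size $2d$, same use of \eqref{eq:polytope-v-width} to pass between $\rho$-diameter and $v_j$-width, same key input.

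One small logical wrinkle in your sharpness argument: invoking ``the tightness half of \cref{thm:Minkowski}'' as given is circular, since in the paper that tightness is itself derived as a consequence of the monochromatic construction carried out inside the proof of \cref{thm:Minkowski-colorful}. You have the right idea --- a single family $\mathcal F$ with the required properties, duplicated across all colors, handles both statements at once --- but you would need to exhibit such a family directly rather than appeal to \cref{thm:Minkowski}. The paper's construction takes $d$ carefully chosen half-spaces per facet of the unit ball ($kd$ in total): each facet's block of half-spaces pins that facet down, any $d-1$ of them leave slack letting a long segment escape, and the full family intersects inside the unit ball. Once one such $\mathcal F$ is in hand, your reduction ``set $\ff_i = \mathcal F$ for all $i$'' is exactly what the paper does.
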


\begin{proof}
We prove the contrapositive. Assume that $\ff_1, \dots, \ff_{kd}$ are finite families of convex sets in $\rr^d$ such that $\bigcap \ff_i$ has $\rho$-diameter at most 1 for each $i \in [kd]$. We want to find a colorful $kd$-tuple whose intersection has $\rho$-diameter at most 1.

For each facet $L_j$ of $P$, let $v_j$ be the vector in $\rr^d$ such that $\langle x,v\rangle = 1$ for every $x \in L_j$. We choose a labelling of the facets so that $L_{(k/2)+j} = -L_j$ for each $j \in [k/2]$. From the assumption on $\rho$-width of $\ff_i$ and \eqref{eq:polytope-v-width}, the $v_j$-width of $\bigcap \ff_i$ is at most 1 for each $i \in [kd]$ and $j \in [k/2]$. Applying the contrapositive of the colorful version of Theorem \ref{thm:v-width} to the collection of $2d$ families $\mathcal F_{2(j-1)d+1},\dots, \mathcal F_{2jd}$ implies that there is a set $F_{i+2(j-1)d} \in \mathcal F_{i + 2(j-1)d}$ for each $i\in [2d]$ such that $\bigcap_{i=1}^{2d} F_{2(j-1)d+i}$ has $v_j$-width less than or equal to 1.

Let $\mathcal G$ denote the family $\{F_1,\dots,F_{kd}\}$.  By construction, $\mathcal{G}$ has exactly one element from each $\ff_i$. Its intersection $\bigcap \mathcal G$ has $v_j$-width at most 1 for every $j \in [k/2]$, so the $\rho$-diameter of $\bigcap \mathcal G$ is at most 1 by \eqref{eq:polytope-v-width}.

Now we prove optimality. Consider a set $\{x_1, \ldots, x_k\}$ of points in $\rr^d$ such that $x_i$ is in the relative interior of $L_i$ and $x_i = -x_{(k/2)+i}$.  For each $i \in [k/2]$, choose $d$ closed half-spaces such that
\begin{itemize}
	\item each half-space contains every point in $\{x_j\}_{j \neq i}$,
	\item the intersection of the $d$ half-spaces with $L_i$ is the singleton $\{x_i\}$, and
	\item the intersection of any $d-1$ of them contains a point $y$ such that $\langle y, v_i \rangle > 1$.
\end{itemize}

Let $\mathcal F$ be the collection all $kd$ half-spaces. The intersection $\bigcap \ff$ is contained in $P$, so its $\rho$-diameter is at most $2$.  For any subset $\ff' \subseteq \ff$ of size $kd-1$, there is a facet $L_i$ of $P$ with at most $d-1$ corresponding half-spaces in $\ff'$.  Therefore, there exists a point $\tilde{x} \in \bigcap \ff'$ outside of $P$ such that the segment $0\tilde{x}$ intersects $L_i$.  We can choose $\tilde{x}$ close enough to $x_i$ so that the segment between $-x_i$ and $\tilde{x}$ has $\rho$-length greater than $2$.  Therefore, the $\rho$-diameter $\bigcap \mathcal F'$ is greater than $2$ for every subset $\mathcal F' \subseteq \mathcal F$ of size $kd-1$, while the $\rho$-diameter of $\bigcap \ff$ is at most $2$. Taking $\ff_i = \ff$ for each $i \in [kd-1]$ shows the optimality of the parameter $kd$.
\end{proof}

Setting $\ff_i = \ff$ for each $i \in [kd]$ in \cref{thm:Minkowski-colorful} proves the Helly-type statement in Theorem \ref{thm:Minkowski}, and the proof the optimality of $kd$ also carries over to the monochromatic version.

\cref{thm:Minkowski} implies the following more general version of Corollary \ref{cor:discrete-fractional}.

\begin{theorem}\label{thm:lp}
	Let $p \geq 1$ and $\ff$ be a finite family of convex sets in $\rr^d$.  If the intersection of every $2d^2$ or fewer sets in $\ff$ has $\ell_p$-diameter greater than or equal to 1, then $\bigcap \ff$ has $\ell_p$-diameter greater than or equal to $d^{-1/p}$. 
\end{theorem}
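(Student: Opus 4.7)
The plan is to reduce \cref{thm:lp} to the $\ell_\infty$ case of \cref{thm:Minkowski} using the standard comparison between the $\ell_p$ and $\ell_\infty$ norms on $\rr^d$. The unit ball of $\ell_\infty$ is the cube $[-1,1]^d$, which is a polytope with exactly $k=2d$ facets, so \cref{thm:Minkowski} with this norm requires intersections of $kd = 2d^2$ sets, matching the hypothesis of \cref{thm:lp}.

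First, I would record the elementary inequalities $\lVert x \rVert_\infty \leq \lVert x \rVert_p \leq d^{1/p} \lVert x\rVert_\infty$ valid for every $x\in\rr^d$, which imply that for any compact convex set $C$,
\[
    \operatorname{diam}_\infty(C) \leq \operatorname{diam}_p(C) \leq d^{1/p} \operatorname{diam}_\infty(C).
\]
Next, I would translate the hypothesis: if every subfamily of $\ff$ of size at most $2d^2$ has intersection of $\ell_p$-diameter at least $1$, then the right-hand inequality above gives that every such intersection has $\ell_\infty$-diameter at least $d^{-1/p}$.

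Now I would apply \cref{thm:Minkowski} to the rescaled Minkowski norm $d^{1/p}\lVert\cdot\rVert_\infty$, whose unit ball is the cube $[-d^{-1/p}, d^{-1/p}]^d$, still a polytope with $2d$ facets. (Equivalently, apply \cref{thm:Minkowski} to $\ell_\infty$ itself and then rescale the whole family by a factor of $d^{1/p}$, invoking the scale-invariance of the hypothesis-conclusion pair.) This yields that $\bigcap\ff$ has $\ell_\infty$-diameter at least $d^{-1/p}$.  Finally, the left-hand inequality $\operatorname{diam}_p \geq \operatorname{diam}_\infty$ upgrades this to $\operatorname{diam}_p\bigl(\bigcap\ff\bigr)\geq d^{-1/p}$, as desired.

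There is no real obstacle; the entire argument is a two-line sandwich of norm inequalities around a single invocation of \cref{thm:Minkowski}. The only thing worth double-checking is the facet count of the $\ell_\infty$ ball (the cube has $2d$ facets, confirming $kd = 2d^2$) and that the constant $d^{-1/p}$ cannot be improved from this route — which it cannot, since a segment of $\ell_p$-length $1$ aligned along the diagonal of the cube has $\ell_\infty$-length exactly $d^{-1/p}$, showing the comparison inequality used is tight.
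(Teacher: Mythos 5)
Your proof is correct and follows essentially the same route as the paper: convert the $\ell_p$ hypothesis to an $\ell_\infty$ hypothesis via the comparison $\lVert x\rVert_\infty \leq \lVert x\rVert_p \leq d^{1/p}\lVert x\rVert_\infty$, apply \cref{thm:Minkowski} to the cube (with $k=2d$ facets, hence $kd=2d^2$), and convert back. The paper states the rescaling implicitly while you make it explicit, but the argument is the same.
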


\begin{proof}
	A set in $\rr^d$ with $\ell_p$-diameter at least 1 has $\ell_\infty$-diameter at least $d^{-1/p}$. Since the unit ball in the $\ell_{\infty}$ norm is a polytope with $2d$ facets, we can employ Theorem \ref{thm:Minkowski} to conclude that the $\ell_{\infty}$-diameter of $\bigcap \ff$ is at least $d^{-1/p}$. The $\ell_p$-diameter of $\ff$ is at least $d^{-1/p}$ as well.
\end{proof}

The $\ell_1$ norm is a useful lens with which to compare our results.  Theorem \ref{thm:lp} says that we can bound the $\ell_1$-diameter of $\bigcap \ff$ by $d^{-1}$ if we know that the intersection of every $2d^2$ sets in $\mathcal F$ has $\ell_1$-diameter greater than or equal to 1, whereas Theorem \ref{thm:Minkowski} says that we can bound the $\ell_1$-diameter of $\bigcap \ff$ by $1$ if we know that the intersection of every $d 2^d$ in $\mathcal F$ sets has $\ell_1$-diameter greater than or equal to 1.  Neither of these consequences implies the other.

We now present two additional proofs of the Helly-type statement in Theorem \ref{thm:Minkowski}. The first proof uses the following lemma, in which the boundary of a set $K \subseteq \rr^d$ is denoted by $\partial K$. 

\begin{lemma}\label{lem:helly-polytope}
    Let $P \subseteq \rr^d$ be a centrally symmetric polytope with $k$ facets and $\mathcal{G}$ be a finite family of sets in $\rr^{2d}$ such that
    \begin{enumerate}
        \item $K \cap (\rr^d \times L)$ is convex for every facet $L$ of $P$ and every $K \in \mathcal{G}$, and
        \item if $x,y \in \rr^d$, $K \in \mathcal G$, and $(x,y) \in K$, then $(x+y,-y) \in K$.
    \end{enumerate}
    If the intersection of every $kd$ or fewer sets in $\mathcal{G}$ contains a point in $\rr^d \times \partial P$, then $\bigcap \mathcal{G}$ contains a point in $\rr^d \times \partial P$.
\end{lemma}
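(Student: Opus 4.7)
The plan is a contradiction argument that combines Helly's theorem applied slicewise with the symmetry provided by condition (2). Suppose, toward a contradiction, that $\bigcap \mathcal G$ contains no point of $\rr^d \times \partial P$. For each facet $L$ of $P$, consider the sliced family $\mathcal G_L := \{K \cap (\rr^d \times L) : K \in \mathcal G\}$. By condition (1), each member of $\mathcal G_L$ is a convex subset of the $(2d-1)$-dimensional affine subspace $\rr^d \times L$, and by assumption its total intersection is empty. Helly's theorem in dimension $2d-1$ therefore produces a subfamily $\mathcal H_L \subseteq \mathcal G$ of cardinality at most $2d$ with $\bigcap \mathcal H_L \cap (\rr^d \times L) = \emptyset$.

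The next step is to use condition (2) to treat antipodal facets together. Define $\phi \colon \rr^{2d} \to \rr^{2d}$ by $\phi(x,y) = (x+y,-y)$. This map is an involution, and condition (2) says $\phi(K) \subseteq K$ for every $K \in \mathcal G$; applying $\phi$ once more forces $\phi(K) = K$. Since $\phi$ also sends $\rr^d \times L$ bijectively onto $\rr^d \times (-L)$, the same subfamily satisfies $\bigcap \mathcal H_L \cap (\rr^d \times (-L)) = \emptyset$. A single bad subfamily of size at most $2d$ thus handles both facets in an antipodal pair.

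Because $P$ is centrally symmetric, its $k$ facets partition into $k/2$ antipodal pairs $\{L_j, -L_j\}$. Choose the bad subfamily $\mathcal H_{L_j}$ for one representative of each pair and set $\mathcal H := \bigcup_{j=1}^{k/2} \mathcal H_{L_j}$. Then $|\mathcal H| \le (k/2)(2d) = kd$, and the containment $\bigcap \mathcal H \subseteq \bigcap \mathcal H_{L_j}$ ensures that $\bigcap \mathcal H$ is disjoint from each $\rr^d \times L_j$ and each $\rr^d \times (-L_j)$, hence from $\rr^d \times \partial P$. This contradicts the hypothesis that every subfamily of size at most $kd$ meets $\rr^d \times \partial P$.

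The only real conceptual point is that halving the Helly bound from $k \cdot 2d$ to $kd$ requires both pieces of structure at once: convexity of the $L$-slices is what lets Helly run facet by facet, while the involution $\phi$ identifies the obstruction at $L$ with the obstruction at $-L$. I do not anticipate any serious obstacle beyond keeping track of the affine dimension $2d-1$ and observing that $\phi$ fixes each $K \in \mathcal G$ setwise (not merely mapping it into itself), which follows automatically from the fact that $\phi$ is an involution.
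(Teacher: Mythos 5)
Your argument is correct, and it takes a genuinely different route from the paper's. The paper mirrors Radon's original proof of Helly's theorem: it argues by induction on $\lvert\mathcal G\rvert$, applies the inductive hypothesis to each family $\mathcal G \setminus \{K_i\}$ to obtain points $(x_i,y_i)$ with $y_i \in \partial P$, pigeonholes more than $2d$ of the $y_i$ into a single antipodal facet pair, reflects via condition~(2) so that at least $2d+1$ points lie in $\rr^d \times L$, and finishes with Radon's lemma in that $(2d-1)$-dimensional affine slice. You instead argue by contradiction and invoke Helly's theorem as a black box, one application per facet pair: assuming $\bigcap\mathcal G$ misses $\rr^d\times\partial P$, you get for each facet $L$ a witness subfamily of size at most $2d$ (Helly in dimension $2d-1$, using that condition~(1) makes the slices convex), show via the involution $\phi$ fixing each $K$ that the same subfamily also kills $\rr^d \times (-L)$, and take the union over the $k/2$ pairs to get a bad subfamily of size at most $kd$. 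The two approaches are dual in spirit and land on the same bound $kd = (k/2)\cdot 2d$; yours is slightly shorter because it reuses Helly rather than re-deriving the Radon argument inside the induction, while the paper's inductive formulation is self-contained and shows directly which specific point of $\rr^d\times\partial P$ ends up in the total intersection. One small sanity check you handled correctly: the observation that $\phi(K)\subseteq K$ together with $\phi^2=\mathrm{id}$ forces $\phi(K)=K$, which is what lets you transport emptiness of $\bigcap\mathcal H_L \cap (\rr^d\times L)$ to the antipodal slice.
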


\begin{proof}
	The general approach is similar to that of Radon's proof of Helly's theorem \cite{Radon:1921vh}. We proceed by induction on $\lvert \mathcal G \rvert$.  If $\lvert \mathcal G\rvert\leq kd$, there is nothing to prove, so we assume the result holds for for all collections of convex sets with $n \ge kd$ members.  Let $G = \{K_1,\dots,K_{n+1}\}$ be a collection of $n+1$ convex sets in $\rr^{2d}$ that satisfies the hypothesis of the lemma.  The induction hypothesis implies that for each $i \in [n+1]$ there is a point $(x_i,y_i) \in \bigcap (\mathcal{G} \setminus K_i)$ such that $y_i \in \partial P$.  Grouping the facets of $P$ by opposing pairs,  there must be a pair of facets $L, -L$ whose union contains at least
	\[
	    \frac{n+1}{k/2} > 2d
	\]
	points in $\{y_i\}_{i=1}^{n+1}$.  By replacing $(x_i, y_i)$ by $(x_i+y_i, -y_i)$ if necessary, the facet $L$ contains at least $2d+1$ points in $\{y_i\}_{i=1}^{n+1}$.  Therefore $\rr^d \times L$ contains at least $2d+1$ points in $\{(x_i,y_i)\}_{i=1}^{n+1}$. Since $\rr^d \times L$ is a $(2d-1)$-dimensional affine subspace, applying Radon's lemma to these $2d+1$ points yields a partition of them into two sets $A$ and $B$ whose convex hulls intersect.  Any point in $\conv (A) \cap \conv (B)$ is in $\bigcap \mathcal{G}$ as well as $\rr^d \times L \subseteq \rr^d \times \partial P$. 
\end{proof}

\begin{proof}[Second proof of Theorem \ref{thm:Minkowski}]
	Let $P$ be the unit ball of $\rho$.  For each convex set $K \subseteq \rr^d$, let\vspace{-0.3em}
	\[
	    S(K) = \big\{ (x,y) \in \rr^d \times \rr^d : x\in K, \ x+y \in K, \ \rho(y) = 1 \big\}.
	\]
	The conditions of Theorem \ref{thm:Minkowski} ensure that we can apply Lemma \ref{lem:helly-polytope} to the family $\mathcal{G} = \{S(K) : K \in \ff\}$.  Given a point $(x,y) \in \bigcap \mathcal{G} \cap (\rr^d \times \partial P)$, every set in $\ff$ contains the segment between $x$ and $x+y$; since $\rho(y) = 1$, the $\rho$-diameter of $\bigcap \mathcal G$ is at least 1.
\end{proof}

Our final proof of \cref{thm:Minkowski} relies on a limit argument.

\begin{proof}[Third proof of Theorem \ref{thm:Minkowski}]
We may assume without loss of generality that every set in $\mathcal F$ is compact. We denote by $K^n$ the $n$-fold product of the set $K$.  Let $f\colon (\rr^d)^k \to \rr$ be defined by
\[
	f(x_1, y_1, \ldots, x_{k/2}, y_{k/2}) = \sum_{i=1}^{k/2} \langle y_i-x_i, v_i\rangle.
\]

If a set $K \subseteq \rr^d$ has $\rho$-diameter greater than or equal to 1, then it has $v_i$-width at least 1 for some $i \in [k/2]$. Therefore there exists a point  $\bar{x} \in K^{k}$ such that $f(\bar{x}) \geq 1$. For each $K \in \mathcal F$, consider the set
\[
    S(K) = \{\bar{x} \in K^{k}: f(\bar{x}) = 1\},
\]
which is convex and lies in an affine subspace of dimension $kd-1$.  Therefore, an application of Helly's theorem implies that $(\bigcap \mathcal F)^{k}$ contains a point $\bar{a}_1$ with $f(\bar{a}_1) = 1$.

Now consider the function $g\colon (\rr^d)^k \to \rr$ defined by
\[
    g(x_1, y_1, \ldots, x_{k/2}, y_{k/2}) = \max_{1\le i\le k/2} \langle y_i - x_i, v_i\rangle.
\]
If $g(\bar{a}_1) = 1$, we are done.  Otherwise, equation \eqref{eq:polytope-v-width} implies that that for each $kd$-tuple $\mathcal F'\subseteq \mathcal F$, there are two points $x,y \in \bigcap \mathcal F'$ and an index $i \in [k]$ such that $\langle y - x, v_i\rangle \ge 1$.  Replacing the corresponding coordinates of $\bar{a}_1$ by $x, y$, we obtain a new point $\bar{x} \in  \left( \bigcap \mathcal F'\right)^{k}$ such that $f(\bar{x}) \ge 1 + (1-g(\bar{a}_1))$.

Bootstrapping the previous arguments, we can find a point $\bar{a}_2 \in \left( \bigcap \mathcal F\right)^{k}$ such that $f(\bar{a}_2) = 2 - g(\bar{a}_1)$.  Iterating this argument creates a sequence $(\bar{a}_n)_{n=1}^\infty$ in $\left( \bigcap \mathcal F\right)^{k}$ such that
\[
    f(\bar{a}_n) \ge {n - \sum_{i=1}^{n-1}g(\bar{a}_i)}
\]
for each $n \in \mathds{N}$. Let $\beta_n = \max\{g(\bar{a}_1), \ldots, g(\bar{a}_n)\}$. We have
\[
	\beta_n \ge g(\bar{a}_n)
	\ge \frac{f(\bar{a}_n)}{d}
	\ge \frac{n - \sum_{i=1}^{n-1}g(\bar{a}_i)}{d}
	\ge \frac{n - (n-1)\beta_n}{d}.
\]
In other words, the $\rho$-diameter of $\bigcap \mathcal F$ is at least $\beta_n \geq n/(d+n-1)$ for every $n \ge 1$.  Taking the limit as $n \to \infty$ finishes the proof.
\end{proof}

The next result shows that there is no exact Helly-type theorem for diameter for any norm whose unit ball is not a polytope.

\begin{theorem}\label{thm:Minkowski-not-polytope}
Let $\rho$ be a Minkowski norm in $\mathds{R}^d$ whose unit ball is not a polytope.  Then, for every integer $n$ there exists a finite family $\mathcal{G}$ of convex sets such that the intersection of every $n$ or fewer sets in $\mathcal G$ has $\rho$-diameter greater than or equal to $1$, but the $\rho$-diameter of $\bigcap \mathcal G$ is strictly less than $1$.
\end{theorem}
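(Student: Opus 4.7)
The plan is to adapt the optimality construction in the proof of \cref{thm:Minkowski} to a polytope $Q$ that tightly circumscribes $B$. The gain over the polytope case is that the full family's intersection is forced into $Q$, while subfamily intersections can be pushed arbitrarily far beyond $Q$ in a facet-normal direction; since diameters are measured in the strictly smaller norm $\rho$, this geometric gap survives a common rescaling. Given $n$, I would choose $k$ with $kd>n$ and $k$ even, then pick $k/2$ smooth extreme points $p_1,\dots,p_{k/2}\in\partial B$ with pairwise distinct supporting hyperplanes and set $p_{k/2+i}:=-p_i$. Let $v_i$ denote the outer normal at $p_i$ normalized so that $\langle p_i,v_i\rangle=1$; this forces $\rho^*(v_i)=1$, where $\rho^*$ is the dual norm. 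The polytope $Q:=\bigcap_{i=1}^{k}\{x\in\rr^d:\langle x,v_i\rangle\le 1\}$ is centrally symmetric with $k$ facets $L_i:=Q\cap\{\langle\cdot,v_i\rangle=1\}$, and smoothness of each $p_j$ forces $\langle p_i,v_j\rangle<1$ for $j\ne i$, so each $p_i$ lies in the relative interior of $L_i$.

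Next, apply the optimality construction at the end of the proof of \cref{thm:Minkowski} to $Q$ using these $p_i$, producing a family $\mathcal{F}$ of $kd$ half-spaces with $\bigcap\mathcal{F}\subseteq Q$ and such that each $(d-1)$-subcollection of the half-spaces pinning $L_i$ contains a point $\tilde{x}_i$ with $\langle \tilde{x}_i,v_i\rangle>1+\mu$; here $\mu>0$ is a parameter that can be made as large as we please by tilting those half-spaces more steeply about $p_i$. Write $D_Q$ for the (finite) $\rho$-diameter of $Q$ and fix $\mu>D_Q-2$. The inclusion $\bigcap\mathcal{F}\subseteq Q$ gives $\rho$-diameter of $\bigcap\mathcal{F}\le D_Q$. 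For any $(kd-1)$-subfamily $\mathcal{F}'$ missing a half-space associated with $L_i$, both $\tilde{x}_i$ and $-p_i\in\bigcap\mathcal{F}$ lie in $\bigcap\mathcal{F}'$, so using $\rho^*(v_i)=1$,
\[
\rho(\tilde{x}_i+p_i)\;\ge\;\langle \tilde{x}_i+p_i,v_i\rangle\;=\;(1+\mu)+1\;=\;2+\mu\;>\;D_Q.
\]
Every subfamily of size at most $n\le kd-1$ is contained in some $(kd-1)$-subfamily, so the same lower bound propagates. Choosing any $\lambda\in[1/(2+\mu),\,1/D_Q)$, a nonempty interval by our choice of $\mu$, and setting $\mathcal{G}:=\lambda\,\mathcal{F}$, the full intersection has $\rho$-diameter at most $\lambda D_Q<1$, while every subintersection of size at most $n$ has $\rho$-diameter at least $\lambda(2+\mu)\ge 1$, as required.

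The main subtle point is showing that a single family $\mathcal{F}$ can simultaneously pin each facet $L_i$ of $Q$ to the singleton $\{p_i\}$, keep $\bigcap\mathcal{F}\subseteq Q$, and allow the ``overshoot'' parameter $\mu$ to be taken arbitrarily large. The first two properties come directly from the polytope construction of \cref{thm:Minkowski}; the third requires selecting the $d$ pinning half-spaces at $p_i$ so that the $(d-1)$-wedges formed by any $d-1$ of them open as deeply as desired in the $v_i$ direction, and one must verify that this tilting freedom is compatible both with pinning $L_i$ and with the global containment $\bigcap\mathcal{F}\subseteq Q$.
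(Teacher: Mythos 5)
Your construction is a genuinely different route from the paper's, and it has some nice elements (in particular, the observation that $\rho^*(v_i)=1$ turns the inner-product estimate directly into a $\rho$-diameter lower bound). But as you acknowledge in your last paragraph, the load-bearing claim---that the overshoot parameter $\mu$ can be driven above $D_Q-2$ while keeping $\bigcap\mathcal{F}\subseteq Q$ and pinning each facet---is left unverified, and it is precisely where the argument is most likely to break. Removing one half-space at $L_i$ leaves a polyhedral region bounded not only by the remaining $d-1$ half-spaces through $p_i$ but also by the $d(k-1)$ half-spaces at the other facets; the escape cone in the $v_i$-direction is cut off by those, and how far it extends depends on their slopes. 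Tilting the $L_i$-half-spaces to be shallower trades off against the pinning condition and against having the other half-spaces contain $p_i$, and it is not clear that one can coordinate all $k$ facets simultaneously to beat the fixed threshold $D_Q-2$, which can be appreciably larger than $0$ (for a circumscribing box of the Euclidean ball it is $2\sqrt{2}-2$). This needs an actual construction, not an assertion. There is also a smaller unaddressed issue: $Q=\bigcap_i\{\langle\cdot,v_i\rangle\le 1\}$ is only a bounded polytope if the normals $\{v_i\}$ positively span $\rr^d$, so the $k/2$ smooth boundary points must be chosen with this in mind; if $Q$ is unbounded then $D_Q=\infty$ and the scaling step collapses.

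For comparison, the paper's proof sidesteps all of this. It takes the \emph{infinite} family $\mathcal{F}$ of minimal supporting half-spaces of the unit ball $P$ and observes that, since $P$ is not a polytope, every finite intersection $\bigcap\mathcal{F}'$ strictly contains $P$ and hence has $\rho$-diameter strictly greater than $2$. The continuous map $(x_1,\dots,x_n)\mapsto\min\{100,\,\diam_\rho(\bigcap H_{x_i})\}$ on the compact domain $(S^{d-1})^n$ then attains a minimum $s_n>2$, giving a \emph{uniform} gap. One then picks a polytope $Q$ with $P\subset Q\subset(1+\varepsilon)P$ for $\varepsilon=(s_n-2)/3$, takes $\mathcal{G}$ to be the half-spaces in $\mathcal{F}$ parallel to the facets of $Q$, and scales by $1/s_n$. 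The compactness argument replaces your need to make $\mu$ large: the non-polytopality automatically produces the uniform slack $s_n-2>0$. I'd recommend either filling in a concrete half-space construction that verifies the $\mu$ claim, or adopting the compactness-and-polytope-approximation mechanism, which is both shorter and avoids the boundedness pitfall.
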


\begin{proof}
Let $P$ be the unit ball of $\rho$ and $\mathcal{F}$ be the infinite family of closed containment-minimal half-spaces that contain $P$.  We parametrize $\mathcal{F}$ using the unit sphere $S^{d-1}$ by associating each vector $x \in S^{d-1}$ with the half-space $H_x \in \mathcal F$ whose bounding hyperplane is perpendicular to it and that contains an infinite ray in the direction of $x$. For any finite family $\ff' \subseteq \ff$, the unit ball $P$ is strictly contained in $\bigcap \ff'$, so the $\rho$-diameter of $\bigcap \ff'$ is strictly larger than $2$. 

We define a function $f\colon (S^{d-1})^n \to \rr$ by
\[
    f(x_1, \ldots, x_n) = \min \bigg\lbrace 100,\ \operatorname{diam}_{\rho} \Big(\bigcap_{i=1}^n H_{x_i}\Big)\bigg\rbrace.
\]
The minimum ensures that $f$ is well-defined when $\bigcap_{i=1}^n H_{x_i}$ is unbounded.  The function $f$ is continuous, and $f(x_1,\ldots, x_n)>2$ for every $n$-tuple in $(S^{d-1})^n$.  Since the domain of $f$ is compact, $f$ attains a minimum value $s_n >2$.

Let $\varepsilon = (s_n - 2)/3$. Standard results on approximation of convex sets by polytopes show that there exists a polytope $Q$ such that $P \subset Q \subset (1+\varepsilon) P$.  In particular, $\diam_{\rho} (Q) \le (1+\varepsilon)\diam_{\rho} (P) = 2(1+\varepsilon) < s_n$.

We define $\mathcal{G} \subseteq \mathcal{F}$ to be the family of half-spaces in $\mathcal F$ whose bounding hyperplanes are parallel to some facet of $Q$, scaled by a factor of $1/s_n$.  The intersection of every $n$ or fewer sets in $\mathcal{G}$ has $\rho$-diameter greater than or equal to $1$. But $\bigcap \mathcal{G} \subseteq (1/s_n)Q$, so its $\rho$-diameter is strictly less than $1$.
\end{proof}



\section{Diameter results for \texorpdfstring{$2d$}{2d}-tuples}\label{sec:frac}

We combine Theorem \ref{thm:v-width} with volume concentration properties of balls to prove theorems \ref{thm:diameter-fractional} and \ref{thm:diameter-fractional-colorful}.  The properties described below can be found in Keith Ball's expository notes \cite{ball1997elementary}.

Let $B$ be a ball centered at the origin, $c > 0$, and $u$ be a unit vector.  The \textit{$c$-cap} of $B$ in the direction $u$ is
\[
C(B,c, u) = \{x \in B : \langle x, u \rangle \ge c \}.
\]
For two unit vectors $u$ and $v$, we have that $v \in C(B, c, u)$ if and only if $u \in C(B, c, v)$.

Let $B_d$ be the unit ball in $\rr^d$, and let $r_d$ be the radius of a volume-one ball in $\rr^d$. Asymptotically, $r_d \sim  d^{1/2}/\sqrt{2 \pi e}$. For a fixed unit vector $u$ and real number $x$, the $(d-1)$-dimensional volume of the intersection of $r_dB_d$ with the hyperplane $\{y \in \rr^d : \langle u, y \rangle = x\}$ converges to $\sqrt{e} \exp (-\pi e x^2)$ as $d \to \infty$. In other words, the volume of the region $\{y \in \rr^d: |\langle y, u \rangle|< x \}\cap r_dB_d$ converges as $d \to \infty$, and converges to zero as $x \to 0$.  For any fixed constant $c$, we define
\begin{align}
    \gamma(c) &= \inf_{d \ge 2} \operatorname{vol}\left[ C\!\left(r_d B_d, \frac{c r_d}{\sqrt{d}}, u\right) \cup\, C\!\left(r_d B_d, \frac{c r_d}{\sqrt{d}}, - u\right)\right] \\
    & = \frac{1}{\operatorname{vol}{(B_d)}}\inf_{d \ge 2} \operatorname{vol}\, \left[ C\!\left( B_d, \frac{c}{\sqrt{d}}, u\right) \cup\, C\!\left(B_d, \frac{c}{\sqrt{d}}, - u\right)\right]\label{eq:slab-size}
\end{align}
This is the remaining volume of $r_d B_d$ after removing a slab centered at the origin with width approximately $2c/\sqrt{2\pi e}$ (see Figure \ref{fig:volume-slices}).  From the discussion above, $\gamma(c) \to 1$ as $c \to 0$. Equation \eqref{eq:slab-size} shows that $\gamma(c)$ is the fraction of the volume of two opposite $(c d^{-1/2})$-caps in the unit sphere. If $c < \sqrt{2}$, the volume of $C\big(r_dB_d, cr_d d^{-1/2}, u\big)$ is strictly positive for each $d \geq 2$ and tends to a positive limit as $d \to \infty$. So $\gamma(c) > 0$ for every $c \in (0,\sqrt{2})$.

\begin{figure}
	\centerline{\includegraphics[scale=0.9]{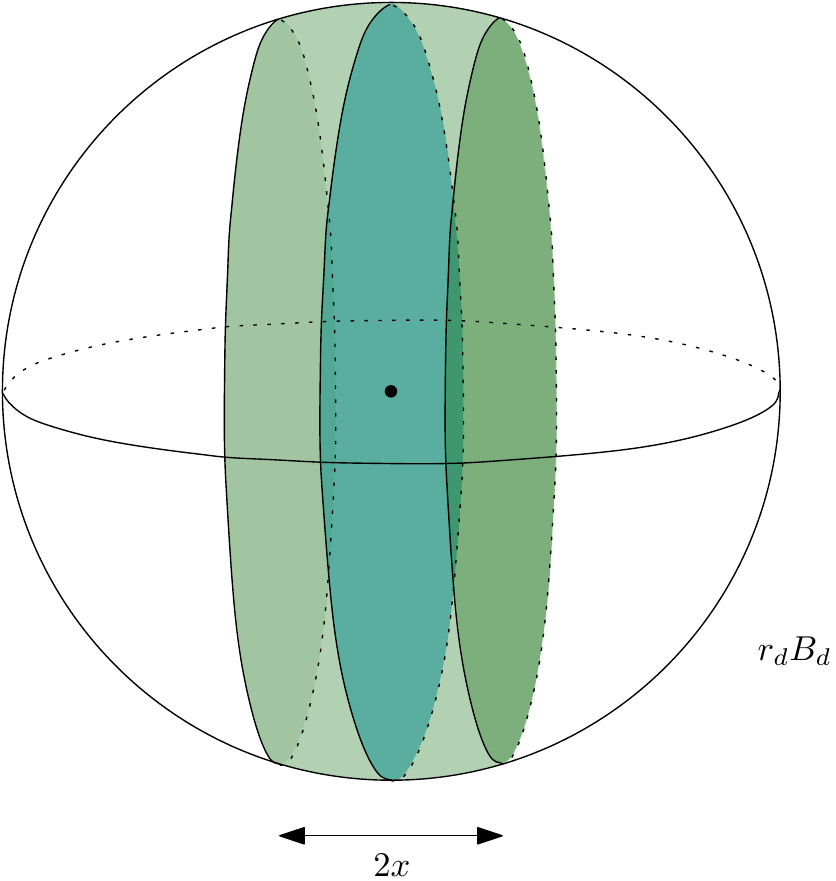}}
	\caption{The volume of the region in $r_d B_d$ between two parallel hyperplanes at distance $x$ from the origin converges as $d \to \infty$.}
	\label{fig:volume-slices}
\end{figure}

\begin{proof}[Proof of Theorem \ref{thm:diameter-fractional}]
	Let $n = \lvert \ff \rvert$. For each subfamily $\ff' \subseteq \ff$ of $2d$ sets whose intersection has diameter greater than or equal to 1, assign a unit vector $u_{\ff'}$ such that $\bigcap \ff'$ contains a unit segment with direction $u_{\ff'}$.  Let $\mathcal G$ be the collection of sets
	\[
    	C\!\left({B}_d, c d^{-1/2}, u_{\ff'}\right) \cup\, C\!\left({B}_d, c d^{-1/2}, -u_{\ff'}\right)
	\]
	 where $\ff'$ is a $2d$-tuple of $\ff$ whose intersection has diameter greater than or equal to 1.  Each such set covers at least a $\gamma(c)$ fraction of the volume of the unit ball.  Therefore, the total volume covered amongst all sets in $\mathcal G$ is at least
	\[
    	 \gamma(c)\cdot \alpha \binom{n}{2d}.
	\]
	Since $\bigcup \mathcal G$ does not contain the origin, there is a nonzero point $x$ in the unit ball covered at least $\gamma(c)\cdot \alpha \binom{n}{2d}$ times by $\mathcal G$.  Setting $v = x/||x||$, the set
	\[
    	C\!\left({B}_d, c d^{-1/2}, v\right) \cup\, C\!\left({B}_d, c d^{-1/2}, -v\right)
	\]
	contains at least $\gamma(c)\cdot \alpha \binom{n}{2d}$ different vectors $u_{\ff'}$.  Thus, the intersection of at least $\gamma(c)\cdot \alpha \binom{n}{2d}$ different $2d$-tuples of $\ff$ have $v$-width greater than or equal to $cd^{-1/2}$. An application of \cref{thm:v-width-fractional} finishes the proof, using the optimal bound for $\beta$ due to Kalai \cites{Kalai:1984isa, Kalai:1986ho}.	
	\end{proof}
	
\begin{proof}[Proof of Theorem \ref{thm:diameter-fractional-colorful}]
    The proof is analogous to that of Theorem \ref{thm:diameter-fractional}. In place of the fractional Helly theorem, we apply the colorful fractional theorem of B\'ar\'any, Fodor, Montejano, Oliveros, and P\'or \cite{Barany:2014bp} with the bound for $\beta$ by Kim \cite{Kim:2017by}.
%
	\end{proof}

\section{Acknowlegments}
The authors thank Silouanos Brazitikos and Gennadiy Averkov for their comments on this work.

\begin{bibdiv}
\begin{biblist}

\bib{Aliev:2016il}{article}{
      author={Aliev, Iskander},
      author={Bassett, Robert},
      author={De~Loera, Jes{\'u}s~A.},
      author={Louveaux, Quentin},
       title={{A quantitative Doignon-Bell-Scarf theorem}},
        date={2016},
     journal={Combinatorica},
      volume={37},
      number={3},
       pages={313\ndash 332},
}

\bib{Amenta:2017ed}{incollection}{
      author={Amenta, Nina},
      author={De~Loera, Jes{\'u}s~A.},
      author={Sober{\'o}n, Pablo},
       title={{Helly{\textquoteright}s theorem: New variations and
  applications}},
        date={2017},
   booktitle={Algebraic and geometric methods in discrete mathematics},
   publisher={American Mathematical Society},
     address={Providence, Rhode Island},
       pages={55\ndash 95},
}

\bib{Averkov:2017ge}{article}{
      author={Averkov, Gennadiy},
      author={Gonz{\'a}lez~Merino, Bernardo},
      author={Paschke, Ingo},
      author={Schymura, Matthias},
      author={Weltge, Stefan},
       title={{Tight bounds on discrete quantitative Helly numbers}},
        date={2017},
     journal={Advances in Applied Mathematics},
      volume={89},
       pages={76\ndash 101},
}

\bib{ball1997elementary}{incollection}{
      author={Ball, Keith~M.},
       title={{An elementary introduction to modern convex geometry}},
        date={1997},
   booktitle={Flavors of geometry},
   publisher={Cambridge University Press},
       pages={1\ndash 58},
}

\bib{Barany:1982va}{article}{
      author={B{\'a}r{\'a}ny, Imre},
       title={{A generalization of Carath{\'e}odory's theorem}},
        date={1982},
     journal={Discrete Mathematics},
      volume={40},
      number={2-3},
       pages={141\ndash 152},
}

\bib{Barany:2014bp}{article}{
      author={B{\'a}r{\'a}ny, Imre},
      author={Fodor, Ferenc},
      author={Montejano, Luis},
      author={Oliveros, Deborah},
      author={P{\'o}r, Attila},
       title={{Colourful and fractional $(p,q)$-theorems}},
        date={2014},
     journal={Discrete {\&} Computational Geometry},
      volume={51},
      number={3},
       pages={628\ndash 642},
}

\bib{Barany:1982ga}{article}{
      author={B{\'a}r{\'a}ny, Imre},
      author={Katchalski, Meir},
      author={Pach, J{\'a}nos},
       title={Quantitative {H}elly-type theorems},
        date={1982},
     journal={Proceedings of the American Mathematical Society},
      volume={86},
      number={1},
       pages={109\ndash 114},
}

\bib{Barany:1984ed}{article}{
      author={B{\'a}r{\'a}ny, Imre},
      author={Katchalski, Meir},
      author={Pach, J{\'a}nos},
       title={Helly's theorem with volumes},
        date={1984},
     journal={The American Mathematical Monthly},
      volume={91},
      number={6},
       pages={362\ndash 365},
}

\bib{Barany:2003wg}{article}{
      author={B{\'a}r{\'a}ny, Imre},
      author={Matou{\v s}ek, Ji{\v{r}}{\'\i}},
       title={{A fractional Helly theorem for convex lattice sets}},
        date={2003},
     journal={Advances in Mathematics},
      volume={174},
      number={2},
       pages={227\ndash 235},
}

\bib{Bell:1977tm}{article}{
      author={Bell, David},
       title={{A theorem concerning the integer lattice}},
        date={1976},
     journal={Studies in Applied Mathematics},
      volume={56},
      number={2},
       pages={187\ndash 188},
}

\bib{Brazitikos:2017ts}{article}{
      author={Brazitikos, Silouanos},
       title={{Brascamp{\textendash}Lieb inequality and quantitative versions
  of Helly's theorem}},
        date={2017},
     journal={Mathematika},
      volume={63},
      number={1},
       pages={272\ndash 291},
}

\bib{Brazitikos:2016ja}{article}{
      author={Brazitikos, Silouanos},
       title={Quantitative {H}elly-type theorem for the diameter of convex
  sets},
        date={2017},
     journal={Discrete {\&} Computational Geometry},
      volume={57},
      number={2},
       pages={494\ndash 505},
}

\bib{Brazitikos:2018uc}{article}{
      author={Brazitikos, Silouanos},
       title={{Polynomial estimates towards a sharp Helly-type theorem for the
  diameter of convex sets}},
        date={2018},
     journal={Bulletin of the Hellenic mathematical society},
      volume={62},
       pages={19\ndash 25},
}

\bib{Damasdi:2019vm}{misc}{
      author={Dam{\'a}sdi, G{\'a}bor},
      author={F{\"o}ldv{\'a}ri, Vikt{\'o}ria},
      author={Nasz{\'o}di, M{\'a}rton},
       title={Colorful {H}elly-type theorems for ellipsoids},
        date={2019},
        note={\href{https://arxiv.org/abs/1909.04997v2}{arXiv:1909.04997v2}
  [math.MG]},
}

\bib{DeLoera:2017th}{article}{
      author={De~Loera, Jes{\'u}s~A.},
      author={La~Haye, Reuben~N.},
      author={Oliveros, Deborah},
      author={Rold{\'a}n-Pensado, Edgardo},
       title={{Helly numbers of algebraic subsets of $\mathds{R}^d$ and an
  extension of Doignon{\textquoteright}s theorem}},
        date={2017},
     journal={Advances in Geometry},
      volume={17},
      number={4},
       pages={473\ndash 482},
}

\bib{DeLoera:2017gt}{article}{
      author={De~Loera, Jes{\'u}s~A.},
      author={La~Haye, Reuben~N.},
      author={Rolnick, David},
      author={Sober{\'o}n, Pablo},
       title={Quantitative combinatorial geometry for continuous parameters},
        date={2017},
     journal={Discrete {\&} Computational Geometry},
      volume={57},
      number={2},
       pages={318\ndash 334},
}

\bib{DeLoera:2017bl}{article}{
      author={De~Loera, Jes{\'u}s~A.},
      author={La~Haye, Reuben~N.},
      author={Rolnick, David},
      author={Sober{\'o}n, Pablo},
       title={Quantitative {T}verberg theorems over lattices and other discrete
  sets},
        date={2017},
     journal={Discrete {\&} Computational Geometry},
      volume={58},
      number={2},
       pages={435\ndash 448},
}

\bib{Dillon:2020ab}{misc}{
      author={Dillon, Travis},
       title={{Discrete quantitative Helly-type theorems with boxes}},
        date={2020},
        note={\href{https://arxiv.org/abs/2008.06013v1}{arXiv:2008.06013v1}
  [math.CO]},
}

\bib{Doignon:1973ht}{article}{
      author={Doignon, Jean-Paul},
       title={{Convexity in cristallographical lattices}},
        date={1973},
     journal={Journal of Geometry},
      volume={3},
      number={1},
       pages={71\ndash 85},
}

\bib{Vidal:2020iw}{misc}{
      author={Fernandez~Vidal, Tom{\'a}s},
      author={Galicer, Daniel},
      author={Merzbacher, Mariano},
       title={{Continuous quantitative Helly-type results}},
        date={2020},
        note={\href{https://arxiv.org/abs/2006.09472v2}{arXiv:2006.09472v2}
  [math.MG]},
}

\bib{Helly:1923wr}{article}{
      author={Helly, Eduard},
       title={{{\"U}ber Mengen konvexer K{\"o}rper mit gemeinschaftlichen
  Punkte}},
        date={1923},
     journal={Jahresbericht der Deutschen Mathematiker-Vereinigung},
      volume={32},
       pages={175\ndash 176},
}

\bib{Holmsen:2017uf}{incollection}{
      author={Holmsen, Andreas},
      author={Wenger, Rephael},
       title={{H}elly-type theorems and geometric transversals},
        date={2017},
   booktitle={Handbook of discrete and computational geometry},
   publisher={CRC Press},
       pages={91\ndash 123},
}

\bib{Kalai:1984isa}{article}{
      author={Kalai, Gil},
       title={{Characterization of $f$-vectors of families of convex sets in
  $\mathds{R}^d$ part I: Necessity of Eckhoff{\textquoteright}s conditions}},
        date={1984},
     journal={Israel Journal of Mathematics},
      volume={48},
      number={2-3},
       pages={175\ndash 195},
}

\bib{Kalai:1986ho}{article}{
      author={Kalai, Gil},
       title={{Characterization of $f$-vectors of families of convex sets in
  $\mathds{R}^{d}$ part II: Sufficiency of Eckhoff's conditions}},
        date={1986},
     journal={Journal of Combinatorial Theory, Series A},
      volume={41},
      number={2},
       pages={167\ndash 188},
}

\bib{Kalai:2005tb}{article}{
      author={Kalai, Gil},
      author={Meshulam, Roy},
       title={{A topological colorful Helly theorem}},
        date={2005},
     journal={Advances in Mathematics},
      volume={191},
      number={2},
       pages={305\ndash 311},
}

\bib{Katchalski:1979bq}{article}{
      author={Katchalski, Meir},
      author={Liu, Andy},
       title={{A problem of geometry in $\mathds{R}^{n}$}},
        date={1979},
     journal={Proceedings of the American Mathematical Society},
      volume={75},
      number={2},
       pages={284\ndash 288},
}

\bib{Kim:2017by}{article}{
      author={Kim, Minki},
       title={{A note on the colorful fractional Helly theorem}},
        date={2017},
     journal={Discrete Mathematics},
      volume={340},
      number={1},
       pages={3167\ndash 3170},
}

\bib{Naszodi:2016he}{article}{
      author={Nasz{\'o}di, M{\'a}rton},
       title={Proof of a conjecture of {B}{\'a}r{\'a}ny, {K}atchalski and
  {P}ach},
        date={2016},
     journal={Discrete {\&} Computational Geometry},
      volume={55},
      number={1},
       pages={243\ndash 248},
}

\bib{Radon:1921vh}{article}{
      author={Radon, Johann},
       title={{Mengen konvexer K{\"o}rper, die einen gemeinsamen Punkt
  enthalten}},
        date={1921},
     journal={Mathematische Annalen},
      volume={83},
      number={1},
       pages={113\ndash 115},
}

\bib{Rolnick:2017cm}{article}{
      author={Rolnick, David},
      author={Sober{\'o}n, Pablo},
       title={Quantitative $(p,q)$ theorems in combinatorial geometry},
        date={2017},
     journal={Discrete Mathematics},
      volume={340},
      number={10},
       pages={2516\ndash 2527},
}

\bib{Sarkar:2019tp}{misc}{
      author={Sarkar, Sherry},
      author={Xue, Alexander},
      author={Sober{\'o}n, Pablo},
       title={{Quantitative combinatorial geometry for concave functions}},
        date={2019},
        note={\href{https://arxiv.org/abs/1908.04438v1}{arXiv:1908.04438v1}
  [math.CO]},
}

\bib{Scarf:1977va}{article}{
      author={Scarf, Herbert},
       title={An observation on the structure of production sets with
  indivisibilities},
        date={1977},
     journal={Proceedings of the National Academy of Sciences of the United
  States of America},
      volume={74},
      number={9},
       pages={3637\ndash 3641},
}

\bib{Soberon:2016co}{article}{
      author={Sober{\'o}n, Pablo},
       title={{Helly-type theorems for the diameter}},
        date={2016},
     journal={Bulletin of the London Mathematical Society},
      volume={48},
      number={4},
       pages={577\ndash 588},
}

\end{biblist}
\end{bibdiv}

\end{document}